\DeclareMathAlphabet{\mathpzc}{OT1}{pzc}{m}{it}
\numberwithin{equation}{section}
   \newcommand{\HMb}{\overline{\mathit{HM}}}
\newtheorem{theorem}{Theorem}[section]
\newtheorem{prop}[theorem]{Proposition}
\newtheorem{lemma}[theorem]{Lemma}
\newtheorem{cor}[theorem]{Corollary}
\newtheorem*{theorem*}{Theorem}
\newtheorem{theoremint}{Theorem}
\newtheorem{questint}[theoremint]{Question}
\theoremstyle{definition}
\newtheorem{definition}[theorem]{Definition}
\theoremstyle{remark}
\newtheorem{remark}{Remark}[section]
\newtheorem{example}{Example}[section]
\newtheorem*{remarkint}{Remark}
\newcommand{\spin}{\mathfrak{s}}
    \def\HMt{%
       \setbox0=\hbox{$\widehat{\mathit{HM}}$}
       \setbox1=\hbox{$\mathit{HM}$}
       \dimen0=1.1\ht0
       \advance\dimen0 by 1.17\ht1
       \smash{\mskip2mu\raise\dimen0\rlap{%
          \begin{turn}{180}
              {$\widehat{\phantom{\mathit{HM}}}$}
           \end{turn}} \mskip-2mu    
                \mathit{HM}
    }{\vphantom{\widehat{\mathit{HM}}}}{}}
\newcommand{\HMf}{\widehat{\mathit{HM}}}
\newcommand{\HMr}{\mathit{HM}}
\begin{document}

\title{On integral rigidity in Seiberg--Witten theory}

\author{Francesco Lin}
\address{Department of Mathematics, Columbia University} 
\email{flin@math.columbia.edu}

\author{Mike Miller Eismeier}
\address{Department of Mathematics, University of Vermont} 
\email{Mike.Miller-Eismeier@uvm.edu}

\begin{abstract}
 We introduce a framework to prove integral rigidity results for the Seiberg--Witten invariants of a closed $4$-manifold $X$ containing a non-separating hypersurface $Y$ satisfying suitable (chain-level) Floer theoretic conditions. As a concrete application, we show that if $X$ has the homology of a four-torus, and it contains a non-separating three-torus, then the sum of all Seiberg--Witten invariants of $X$ is determined in purely cohomological terms.
 
 Our results can be interpreted as $(3+1)$-dimensional versions of Donaldson's TQFT approach to the formula of Meng--Taubes, and build upon a subtle interplay between irreducible solutions to the Seiberg--Witten equations on $X$ and reducible ones on $Y$ and its complement. Along the way, we provide a concrete description of the associated graded map (for a suitable filtration) of the map on $\HMb_*$ induced by a negative-definite cobordism between three-manifolds, which might be of independent interest.
\end{abstract}

\maketitle
\setcounter{tocdepth}{2}

\section{introduction}

Seiberg--Witten invariants \cite{WittenOG,Mor} are a fundamental tool in $4$-dimensional topology. Given their versatility, understanding these invariants is an extremely challenging problem that has attracted a considerable amount of attention in the past thirty years. Despite this, we are currently lacking a framework to carry out the computation in general examples. Less ambitiously, it is unclear what kind of constraints such invariants must satisfy. 
\par
A very fruitful line of investigation towards such constraints is that of ``mod $2$ rigidity results'' for the Seiberg--Witten invariants of \textit{spin} manifolds, which assert that the Seiberg--Witten invariants (mod $2$) depend only on some simpler topological information. This started with Morgan--Szab\'o's proof that for a homotopy $K3$ the invariant of the trivial spin$^c$ structure is odd \cite{MorSz}. Ruberman and Strle \cite{RubermanStrle} proved an interesting rigidity result for \textit{homology tori}, 4-manifolds $X$ with the integral homology of a torus. There is a cohomological invariant attached to homology tori, the \textit{determinant}
\begin{equation}\label{determinant}
\det(X)=|\langle x_1\cup x_2\cup x_3\cup x_4, [X]\rangle| \in \mathbb{N},
\end{equation}
where the $x_i$ form an integral basis of $H^1(X)=\mathbb{Z}^4$. Their main result is then the following.
\begin{theorem*}[\cite{RubermanStrle}]
Consider a homology $4$-torus $X$ which is \textbf{spin}. Then the sum of all (degree zero) Seiberg--Witten invariants $\mathfrak{m}(X)$ of $X$ has the same parity as $\det(X)$.
\end{theorem*}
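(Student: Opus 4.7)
My plan is to exploit the charge-conjugation symmetry of Seiberg--Witten theory---enhanced for spin manifolds---and reduce the mod $2$ computation to the self-conjugate (i.e.\ spin) locus of the set of spin$^c$ structures.

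First I would use the identity $SW(X, \bar{\spin}) = \pm SW(X, \spin)$ to group spin$^c$ structures into conjugate pairs, each contributing an even quantity to $\mathfrak{m}(X)$. This yields
\[
\mathfrak{m}(X) \equiv \sum_{\spin \,=\, \bar{\spin}} SW(X, \spin) \pmod 2.
\]
Since $H^2(X; \mathbb{Z}) \cong \mathbb{Z}^6$ is torsion-free, the condition $\spin = \bar{\spin}$ forces $c_1(\spin) = 0$; and because $X$ is spin, these self-conjugate structures correspond bijectively to the $|H^1(X; \mathbb{Z}/2)| = 2^4 = 16$ spin structures on $X$.

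For each such spin structure the Seiberg--Witten moduli space carries an extra $\mathrm{Pin}(2)$-symmetry generated by $j$. On the zero-dimensional quotient moduli space, the mod $2$ count $SW(X,\spin)\bmod 2$ reduces to counting $j$-fixed $S^1$-orbits. Following the philosophy of the present paper, I would cut $X$ along the non-separating copy of $T^3 \subset X$ and compute these mod-$2$ counts via the family of reducible Seiberg--Witten solutions on $Y = T^3$, which is itself naturally parameterized by a torus (the moduli of flat $\mathrm{U}(1)$-connections).

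The main obstacle is the final step: organizing the total contribution of the $j$-fixed reducibles, summed over the $16$ spin structures on $X$, into the cohomological quantity $\det(X)$. The expected mechanism is that $H^1(Y;\mathbb{Z}/2)$ acts on the reducible locus on $T^3$ via gauge transformations, so that summing over spin structures on $X$ amounts to evaluating a characteristic intersection class against $[X]$. Carrying out this identification carefully---presumably via a parameterized moduli space argument in which the parameter torus is acted on with degree $\det(X)$---should produce exactly the quadruple cup product defining $\det(X) \bmod 2$.
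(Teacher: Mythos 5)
Your opening move is fine, but the central bookkeeping is wrong. Conjugation symmetry does reduce $\mathfrak m(X)$ mod $2$ to the self-conjugate locus (and since $\chi(X)=\sigma(X)=0$ the sign in $SW(X,\bar{\spin})=\pm SW(X,\spin)$ is harmless). However, because $H^2(X;\mathbb{Z})\cong\mathbb{Z}^6$ is torsion-free, a spin$^c$ structure on $X$ is determined by its first Chern class, so there is exactly \emph{one} self-conjugate spin$^c$ structure $\spin_0$, namely the one with $c_1=0$ --- not sixteen. The sixteen spin structures (a torsor over $H^1(X;\mathbb{Z}/2)$) all induce this same $\spin_0$, and the Seiberg--Witten invariant depends only on the underlying spin$^c$ structure. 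The correct reduction is $\mathfrak m(X)\equiv \mathfrak m(X,\spin_0)\pmod 2$; your plan of ``summing the total contribution over the $16$ spin structures on $X$'' would count the same invariant sixteen times, give an even answer, and hence ``prove'' that $\mathfrak m(X)$ is always even --- false already for $X=T^4$, where $\mathfrak m(X)=\pm 1=\det(X)$.

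The remaining, essential step --- showing $\mathfrak m(X,\spin_0)\equiv\det(X)\pmod 2$ --- is exactly where your sketch stops, and the mechanism you propose is not available. The Ruberman--Strle statement assumes only that $X$ is a spin homology $4$-torus; the existence of a non-separating $T^3\subset X$ is the \emph{additional} hypothesis of Theorem~A of this paper, not of the quoted theorem, so cutting along such a hypersurface cannot be part of its proof. The relevant parameter space in the actual argument is the Picard torus $H^1(X;i\mathbb{R})/H^1(X;2\pi i\mathbb{Z})\cong T^4$ of reducibles on $X$ itself, and the identification of the $\mathrm{Pin}(2)$-equivariant count with the quadruple cup product mod $2$ --- via the index theory of the Dirac family over that torus --- is the substantive content of the theorem; ``should produce exactly $\det(X)$'' defers precisely this computation. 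As written, the proposal is an outline in the right general spirit ($\mathrm{Pin}(2)$-symmetry plus a reducible-family computation), but it contains a fatal counting error and omits the step that carries all the content.
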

The essential point is that on a spin manifold the Seiberg--Witten equations admit a $\mathrm{Pin}(2)$-symmetry. See \cite{TJLi,Bauer,Baraglia} for further developments in this direction.
\\
\par
The goal of this article is instead to prove some \textit{integral} rigidity results for Seiberg--Witten invariants of a closed 4-manifold $X$ in the presence of a \textit{non-separating} hypersurface $Y$ satisfying suitable conditions. For simplicity, we will not discuss homology orientations and all the results will be stated up to an overall sign. Let us begin by focusing on the special case in which $Y=T^3$ is the three-torus, in which case we have the following purely cohomological formula for the sum of Seiberg--Witten invariants. 

\begin{theoremint}\label{cor:det-squared}
    Suppose $X$ is a homology torus which contains a non-separating 3-torus, and assume that $\sigma(X)=0$ if $\mathrm{det}(X)=0$.\footnote{If $\mathrm{det}(X)\neq 0$, the signature $\sigma(X)$ is automatically zero.} If $X$ admits a spin$^c$ structure restricting to the unique torsion one on $T^3$, then
    \begin{equation*}
         \mathfrak m(X) = \pm \det(X) \cdot\# \left|H^2(T^3)/\mathrm{Im}\left(H^2(X)\rightarrow H^2(T^3)\right)\right|\in\mathbb{Z}.
    \end{equation*}
If not, all its Seiberg--Witten invariants $\mathfrak{m}(X,\spin_X)$ vanish.
\end{theoremint}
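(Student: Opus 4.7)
The plan is to cut $X$ along the non-separating hypersurface $Y = T^3$, producing a cobordism $W : T^3 \to T^3$ whose pair-of-pants closure (identifying its two boundary components via the identity) recovers $X$. I would then express the individual Seiberg--Witten invariants as a categorical trace of the cobordism map on monopole Floer homology, using the non-separating analogue of the Kronheimer--Mrowka pairing formula:
\[
\mathfrak{m}(X,\spin_X) \;=\; \pm\,\mathrm{tr}\!\left(F_{W,\spin_W}\colon \HMb_*(T^3,\spin_Y)\to \HMb_*(T^3,\spin_Y)\right),
\]
where $\spin_Y := \spin_X|_{T^3}$ and $\spin_W := \spin_X|_W$. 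The proof then reduces to evaluating this trace and summing over the relevant $\spin_X$.

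For the vanishing statement, suppose no spin$^c$ structure on $X$ restricts to the torsion one on $T^3$. Then every $\spin_Y$ arising as $\spin_X|_{T^3}$ is non-torsion, and so $\HMb_*(T^3,\spin_Y)$ carries no irreducible content that can contribute to the trace (by the standard computation of monopole Floer homology of $T^3$ in a non-torsion spin$^c$ structure). The trace formula then immediately gives $\mathfrak{m}(X,\spin_X)=0$ for every $\spin_X$.

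For the main formula, restrict attention to $\spin_X$ whose restriction to $T^3$ is the torsion $\spin_0$; these form a coset of $\ker(H^2(X)\to H^2(T^3))$ inside $\mathrm{Spin}^c(X)$. On the torsion summand, $\HMb_*(T^3,\spin_0)$ is the familiar Kronheimer--Mrowka module built on $\Lambda^* H^1(T^3;\mathbb{Z})\otimes \mathbb{Z}[U,U^{-1}]$ (up to grading and completion conventions). I would decompose $F_{W,\spin_W}$ into its associated graded pieces with respect to the natural filtration of $\HMb_*$; the associated-graded description advertised in the abstract expresses these pieces explicitly in terms of the restriction map $H^*(X)\to H^*(T^3)$ and the cup-product structure of $W$. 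Computing the trace, the top-degree stratum should contribute the full top cup product $\langle x_1\cup x_2\cup x_3\cup x_4,[X]\rangle = \pm\det(X)$, where $x_1,\dots,x_4$ is an integral basis of $H^1(X)$, while summing over $\spin_X$ in the torsion coset picks up the lattice-index factor $|H^2(T^3)/\mathrm{Im}(H^2(X)\to H^2(T^3))|$ as the size of the effective set of distinct contributions.

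The technical heart, and principal obstacle, lies in this integral trace computation: a priori many filtration strata of $\HMb_*(T^3,\spin_0)$ contribute to $\mathrm{tr}(F_W)$, and showing that after summation over the coset of spin$^c$ structures only the top cohomological stratum survives --- producing $\det(X)$ --- while the remaining contributions organize exactly into the cokernel count $|H^2(T^3)/\mathrm{Im}|$ requires the full chain-level, rather than merely spectral-sequence, control provided by the associated-graded result the paper develops. Once that cancellation is established, identifying the surviving term with $\pm\det(X)\cdot|H^2(T^3)/\mathrm{Im}|$ is a direct combinatorial calculation using the cup-product pairing on the homology torus.
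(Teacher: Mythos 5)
Your outline captures the broad strategy (cut along $T^3$, express the invariant as a trace of the cobordism map, use the associated-graded description), but the central gluing step is misstated in a way that breaks the argument. First, the self-gluing formula does not compute individual invariants: it computes $\sum_{\spin_X|_W\cong\spin_W}\mathfrak m(X,\spin_X)$ for a fixed $\spin_W$ on the complement, so no per-$\spin_X$ trace formula is available. Second, and more seriously, the trace is not taken on $\HMb_*(T^3,\spin_0)$: that group is an infinite-dimensional $\mathbb{Z}[U,U^{-1}]$-module (no naive trace exists) and its cobordism map counts only reducible solutions, so it literally cannot see $\mathfrak m(X)$. The correct statement requires a nonexact local system $\Gamma_\eta$ with $[\eta]\neq 0$ (obtained by intersecting a $2$-cycle of $X$ with $Y$, which exists because $b^+(X)>0=b^+(W)$), which kills $\HMb_*$ and identifies $\HMt_*\cong\HMf_*$ with the finite-dimensional reduced group; the trace is taken there. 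The local systems are not decoration: they are what makes the trace finite, and the passage from the reducible-only computation (the associated graded of $\bar m_*$ for a negative-definite cobordism) to the trace of $\hat m_*$ is exactly where the chain-level structure of $\hat C_*(T^3,\spin_0)$ enters --- no irreducibles and $\partial^u_s$ strictly filtered (the $RSF$ condition). With that in place, Hopf's trace formula localizes the entire trace to the single stratum $\mathrm{gr}_{-2}\hat C=\langle x_0\rangle$; the ``delicate cancellation across many filtration strata'' that you flag as the principal obstacle does not arise once the correct state space and filtration are used, and without the $RSF$ input your sketch has no mechanism to control the non-filtered part of $\hat\partial$ and $p=1+\partial^u_s$ at all.

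The bookkeeping of the two factors is also off. The cup-product factor does not appear as a ``top stratum contributing $\langle x_1\cup x_2\cup x_3\cup x_4,[X]\rangle$'': what the trace computes is $\mathrm{disc}(W)=|\det(i^*:H^1(W)\to H^1(T^3))|$, the degree of $\mathbb T_W\to\mathbb T_{T^3}$, and identifying this with $\det(X)$ (together with showing $\det(X)\neq 0$ iff $b_1(W)=3$, $b^+(W)=0$, $\mathrm{disc}(W)\neq0$, and that $i^*=j^*$ then) is a separate algebraic-topology argument you would still need. Likewise the cokernel factor is not the ``size of the effective set of distinct contributions'' among spin$^c$ structures on $X$ restricting to $\spin_0$ (that coset is infinite); it is the number of spin$^c$ structures on $W$ which are torsion on the ends and have $d(\spin_W)=0$, which for a negative-definite $W$ with $\chi(W)=\sigma(W)=0$ is the number of torsion spin$^c$ structures on $W$, i.e.\ $|\mathrm{Tors}\,H^2(W)|$, and only then is identified with $\#|H^2(T^3)/\mathrm{Im}(H^2(X)\to H^2(T^3))|$ by Mayer--Vietoris and duality. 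Finally, you never treat the degenerate cases ($b^+(W)>0$, $b_1(W)\neq 3$, or $d(\spin_W)\neq 0$), which are needed both to reduce the a priori infinite sum over $\spin_W$ to this finite count and to obtain the vanishing hidden in the formula when $\det(X)=0$. (Your vanishing argument for the non-torsion case is workable in spirit, though the relevant fact is the vanishing of the Floer homology of $T^3$ in non-torsion spin$^c$ structures --- equivalently one can argue directly on $X$ via the adjunction inequality and the triviality of the Thurston norm of $T^3$.)
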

Importantly, we do not assume $X$ is spin here, but of course when $X$ is spin there exists a spin$^c$ structure restricting to the torsion one on $T^3$. Furthermore, if $\mathrm{det}(X)$ is odd, $X$ is automatically spin. It is interesting to notice that in this case, the second factor in our formula is independent of the choice of non-separating three-torus.
\begin{remarkint}
While it seems plausible that there exist $4$-manifolds for which the second clause in Theorem \ref{cor:det-squared} applies, the authors do not have concrete examples.
\end{remarkint}
For homology four-tori of the form $X = S^1 \times M$ for some homology three-torus $M$ the work of Meng--Taubes (\cite{MengTaubes}, cf. \cite{RubermanStrle}) implies that
\begin{equation*}
\mathfrak m(X)=\pm \mathrm{det}(X)^2,
\end{equation*}
and one readily checks that when $M$ containes a separating torus this coincides with our formula (see Example \ref{ex:3mfd}). In Example \ref{ex:misdet} we describe a broad class of homology tori for which $\mathfrak m(X) =\pm  \det(X)$ instead.
\begin{remarkint}
Whether or not an exotic $T^4$ exists is an outstanding question in four-dimensional topology.
\end{remarkint}

This first result is a direct consequence of much more general rigidity results for $4$-manifolds $X$ containing a non-separating three-torus $Y=T^3$. For the statement, it will be convenient to work with the complement $W = X \setminus Y$ of the hypersurface; it carries two natural inclusion maps $i,j: Y \to W$, corresponding to the two sides of the hypersurface. In the case of most interest, we will have $b_1(W) = 3$, so the map $$i^*: H^1(W) \to H^1(Y)$$ is a homomorphism of rank 3 free abelian groups. There is a well-defined natural number, the \textit{discriminant of $W$,} 
\begin{equation}\label{discriminant}
\text{disc}(W) = |\det(i^*)| = \#|H^1(Y)/\text{im}(i^*)|,
\end{equation}
which is interpreted as zero if the right hand side is infinite. One could also use the map $j^*$ for the definition, cf. Remark \ref{samecup}. For $\spin$ a spin$^c$ structure on $W$ which is torsion on the ends, write
\begin{equation}\label{dspinW}
d(\spin_W) = \frac 14\left(c_1(\spin_W)^2 - 2 \chi(W) - 3\sigma(W)\right);
\end{equation}
see \cite[Section 28.3]{KM} for the definition of $c_1^2$ in this context.
We also record the quantity
\begin{equation}\label{DW}
D(W) = \# \{\spin_W \mid i^* \spin_W \cong j^* \spin_W \text{ are torsion and } d(\spin_W) = 0\}.
\end{equation}
We have $D(W) \ne 0$ if and only if $X$ supports an almost complex structure $J$ with $c_1(J)|_Y$ torsion, cf. \cite[Appendix $1.4$]{GompfStipsicz}.

\begin{theoremint}\label{thm:torus-rigidity}
    Suppose $X$ is a closed oriented, connected 4-manifold with $b^+(X) \ge 2$. If $T^3\subset X$ is a non-separating three-torus, the sum of all Seiberg--Witten invariants satisfies $$\pm \mathfrak m(X) = \begin{cases} \textup{disc}(W) D(W) &\text{if } b^+(W) = 0 \text{ and } b_1(W) = 3, \\ 0 & \text{otherwise}. \end{cases}$$
\end{theoremint}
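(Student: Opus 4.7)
The strategy is to realize $\mathfrak m(X)$ as a Lefschetz-type trace of the cobordism map on monopole Floer homology induced by $W \colon T^3 \to T^3$, and then evaluate this trace using the explicit structure of $\HMb_*(T^3)$ in the torsion spin$^c$ structure. Throughout, the word ``torsion'' refers to the unique torsion spin$^c$ structure on $T^3$.

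First I would set up a non-separating pairing formula. Cutting $X$ along the non-separating $T^3$ exhibits it as the self-composition of $W$, glued along the two boundary copies of $T^3$. Since $T^3$ admits no irreducible Seiberg--Witten solutions in the torsion spin$^c$ structure, all Floer data on the neck is reducible, and the relevant Floer group is $\HMb_*(T^3)$. In the spirit of Donaldson's TQFT approach to Meng--Taubes, the goal is to prove
\[
\mathfrak m(X) \;=\; \pm \sum_{\spin_W} \operatorname{tr}\!\left(\HMb_*(W,\spin_W) \colon \HMb_*(T^3) \to \HMb_*(T^3)\right),
\]
where the sum is over spin$^c$ structures on $W$ whose two boundary restrictions agree, are torsion, and satisfy $d(\spin_W) = 0$. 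This is the non-separating analogue of the standard pairing formula; the $\pm$ absorbs homology orientation conventions.

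Next I would compute this trace. For the torsion spin$^c$ structure, $\HMb_*(T^3)$ is a rank-$8$ free $\mathbb Z[U,U^{-1}]$-module with a distinguished basis indexed by $\Lambda^* H^1(T^3;\mathbb Z)$. The map $\HMb_*(W,\spin_W)$ carries a natural filtration whose associated graded is the ``concrete description'' promised in the abstract: it depends only on the restriction maps $i^*, j^* \colon H^1(W) \to H^1(T^3)$ and on $c_1(\spin_W)$. When $b^+(W) = 0$ and $b_1(W) = 3$, the graded trace collapses to a single linear-algebraic determinant evaluating to $\pm \operatorname{disc}(W)$ per admissible spin$^c$ structure, and summing gives $\pm \operatorname{disc}(W) \cdot D(W)$. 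If $b^+(W) > 0$, positive-dimensional reducible moduli on $W$ force the relevant summand of the cobordism map (after standard perturbation and blowup) to vanish. If $b_1(W) \neq 3$, the failure of $i^*$ to be an isomorphism forces the graded trace to vanish identically.

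The principal obstacle is the first step: the standard monopole pairing theorems concern separating cuts and relate $\mathfrak m(X)$ to a pairing between $\HMr^\bullet$ and $\HMr_\bullet$, not to a trace on $\HMb_*$. What is needed here is a chain-level, non-separating version localized on the reducible part of the Floer complex of $T^3$. This requires a neck-stretching analysis showing that irreducible SW moduli on $X$ correspond, in the limit, to trajectories on $W$ asymptotic to reducible critical points on the two ends, together with a careful study of how the filtration on $\HMb_*$ interacts with the cobordism map — precisely the result of independent interest highlighted in the abstract. Once these ingredients are in place, Step~2 and the trace evaluation reduce to a concrete but nontrivial linear-algebra computation, and the dichotomy in the theorem follows by matching the answer against $\operatorname{disc}(W)$ and $D(W)$.
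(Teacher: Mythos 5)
There is a genuine gap, and it sits in your Step 1. The non-separating gluing formula does not express $\mathfrak m(X)$ as a trace on $\HMb_*(T^3)$. The correct statement (the paper's self-gluing proposition) is: for a $2$-cycle $\bar\nu$ in $X$ meeting $T^3$ in a $1$-cycle $\eta$ with $[\eta]\neq 0$, the sum $\mathfrak m(X,\spin_W,[\bar\nu])$ equals the supertrace of $\mathit{HM}_*(W,\spin_W;\Gamma_\nu)$ on the \emph{reduced} Floer homology with the nonexact local system $\Gamma_\eta$ (equivalently on $\HMf_*$, since $\HMb_*(T^3,\spin_0;\Gamma_\eta)=0$), and one recovers $\mathfrak m(X,\spin_W)$ by letting the local system parameter tend to $0$. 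A trace on $\HMb_*(T^3)$ is problematic twice over: $\HMb_*$ has infinite rank over $\mathbb{Q}$, so the supertrace is not defined without the local-coefficient vanishing; and, more seriously, the cobordism map on $\HMb_*$ counts only \emph{reducible} solutions on $W$, whereas $\mathfrak m(X)$ counts irreducibles on $X$, which under neck-stretching limit to solutions on $W^*$ that are in general irreducible. The absence of irreducibles on $T^3$ constrains the asymptotic limits, not the solutions on $W$, so your localization claim is a non sequitur. (Also, $\HMb_*(T^3,\spin_0)$ is not the free rank-$8$ module you describe --- that is the chain group $\bar C_*$; the triple cup product of $T^3$ is nontrivial, so over $\mathbb{Q}$ the bar homology has rank $6$ over $\mathbb{Q}[U,U^{-1}]$.) The paper's actual mechanism is to start from the trace of $\HMf_*(W;\Gamma_\nu)$, which does involve irreducibles on $W$ through $\hat m$ and the term $\partial^u_s$, and then descend to reducible data: this needs the chain-level $RSF$ property of $T^3$ (strict filteredness of $\partial^u_s$, making $\hat C$ a filtered complex with explicit associated graded), the theorem identifying the associated graded of $\bar m_*$ with $U^d m_{W,\nu}$, and two applications of Hopf's trace formula together with the observation that the trace of a filtered map equals the trace of its associated graded. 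Your "careful study of the filtration" is aimed at the wrong group and omits exactly this $RSF$ input.

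A second, independent gap: your trace formula restricts from the outset to spin$^c$ structures on $W$ that are torsion on the boundary, but the theorem concerns the sum of \emph{all} Seiberg--Witten invariants of $X$, including spin$^c$ structures whose restriction to $T^3$ is non-torsion. You need the adjunction/Thurston-norm step: since $T^3$ has trivial Thurston norm and $H_2(T^3;\mathbb{R})$ is generated by embedded tori of self-intersection zero, any basic class of $X$ restricts to a torsion class on $T^3$; only then may the sum be re-indexed over torsion-restricting $\spin_W$. The remaining endgame in your proposal (graded trace $=\pm\,\textup{disc}(W)$ per admissible $\spin_W$, the count $D(W)$, vanishing when $b^+(W)>0$ or $b_1(W)\neq 3$ for degree reasons, $d(\spin_W)=0$ forcing torsion $c_1$ in the definite case) does match the paper's, but it rests on the two missing ingredients above.
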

\begin{remarkint}
We will refer to a cobordism $W$ with $b^+(W)=0$ as \textit{negative-definite}. For any cobordism $W$, if we consider the cup product restricted to
\begin{equation*}
\hat H(W) = \text{Im}\left(H^2(W,\partial W)/\mathrm{tors}\rightarrow H^2(W )/\mathrm{tors}\right)
\end{equation*}
the resulting pairing is non-degenerate, and $b^+(W) = 0$ is equivalent to the pairing on $\hat H(W)$ being negative-definite.
\end{remarkint}
\begin{remarkint}
With additional care, this result also applies to the case $b^+(X)=1$, cf. the remark after Theorem \ref{thm:weak-calculation} below.
\end{remarkint}

For comparison, if $X$ contains a torus $T^3$ \textit{separating} it in two pieces $X_1,X_2$ with $b^+\geq1$, then the sum of all Seiberg--Witten invariants $\mathfrak{m}(X)$ vanishes, see \cite[Corollary 3.11.2]{KM}; this is a direct consequence of the fact that the reduced Floer homology group $\HMr_{*}(T^3)$ vanishes.


By contrast, the proof of Theorem \ref{thm:torus-rigidity} is more subtle, and is special to $T^3$ in three ways:

\begin{enumerate}
    \item First, $T^3$ has \textit{trivial Thurston norm}, which guarantees that all basic classes on $X$ must restrict to the torsion spin$^c$ structure $\spin_0$ on $T^3$. 
    \item Even though the reduced Floer homology group $\HMr_{*}(T^3,\spin_0)$ vanishes, the Floer homology $$\HMr_{*}(T^3,\spin_0;\Gamma_\eta)\cong \mathbb{R}$$ is \textit{nontrivial} with respect to an appropriate local coefficient system $\Gamma_{\eta}$.
    \item Finally, there are suitable metrics and perturbations so that the Seiberg--Witten equations have \textit{no irreducible solutions}, so that the generators of the Floer chain complex $\hat{C}_*(T^3,\spin_0)$ all arise from unstable reducible critical points; furthermore, the chain complex $\hat{C}_*$ is very simple and concretely understandable.
\end{enumerate}
Given these, the argument roughly proceeds as follows. First of all, a suitable gluing theorem allows to identify $\mathfrak{m}(X)$ as the (super)trace of the cobordism map induced by $W$ (equipped with a suitable local system morphism). Such a map involves counting irreducible solutions to the Seiberg--Witten equations, so is usually no easier to compute than $\mathfrak{m}(X)$ itself.
\par
On the other hand we will see that one can obtain a concrete description (at the chain level) of the associated graded map induced in $\HMb_*$ by $W$. This only counts reducible solutions, and carries interesting information related to the maps
\begin{equation}\label{eq:correspondence}
H^1(Y) \xleftarrow{i^*} H^1(W)\xrightarrow{j^*} H^1(Y)
\end{equation}
induced by inclusion. Because in our situation the Floer chain complex $\hat{C}_*$ is closely related to $\bar{C}_*$, this will allow to reconstruct the cobordism map we are actually interested in.
\\
\par
In particular, the fundamental mechanism behind the rigidity result in Theorem \ref{thm:torus-rigidity} is a subtle interplay between irreducible solutions to the Seiberg--Witten equations on $X$ and the reducible ones on the hypersurface $T^3$ and its complement $W$. 
\par
Our strategy can be in fact interpreted as a $(3+1)$-dimensional version of Donaldson's \cite{Don} $(2+1)$-dimensional TQFT approach to the formula of Meng--Taubes \cite{MengTaubes}. There he associates to surfaces $\Sigma_0,\Sigma_1$ the cohomology of their symmetric products (the latter being the moduli space of solutions to the vortex equations), and to a cobordism $W$ between them the cohomology of the moduli space of solutions to the Seiberg--Witten equations. From the computation of the cohomology of symmetric products and naturality arguments, the Alexander polynomial arises as the trace for self-cobordisms in the simpler TQFT associating to a surface $\Sigma$ the group $\Lambda^* H^1(\Sigma;\mathbb{Z})$ and to a cobordism the map induced by the correspondence as in (\ref{eq:correspondence}).
\par
The $(3+1)$-dimensional picture is significantly more subtle: both in that the study of the Seiberg--Witten equations on three-manifolds is much richer than the case of surfaces, and that the relation with Donaldson's TQFT only holds in the negative definite situation. In this case, our results show that in fact Donaldson's TQFT determines for a suitable filtration the \textit{associated graded} of the map in $\HMb_*$ induced by $W$. While this is all we need for our trace computations, the higher order terms are expected to have a more subtle relationship to the topology of Dirac operators on $W$.
\\
\par
Given the key properties used in the outline above, it is natural to ask for generalizations of the result to other types of hypersurfaces. If a $4$-manifold $X$ (which we assume to have $b^+\geq 1$ so that one can define Seiberg--Witten invariants) admits a hypersurface $Y$ with negative definite complement, then $b_1(Y)\geq 1$, so we will assume it throughout the paper. From a Floer-theoretic perspective, the most important and interesting assumption is $(3)$. This leads us to introduce the notion of \textit{$RSF$-space} in Definition \ref{maindef}, where $R$ and $SF$ mean \textit{reducible} and \textit{strictly filtered} respectively. These are torsion spin$^c$ three-manifolds $(Y,\spin)$ with $b_1\geq 1$ satisfying, for suitable metric and perturbations, a condition regarding the complexes $\hat{C}_*(Y,\spin)$ and $\bar{C}_*(Y,\spin)$ and the map relating them. Being a chain-level condition, it is quite subtle to check in practice. Other than the three-torus, it also holds in other interesting examples with $b_1\geq 1$:
\begin{itemize}
    \item the flat three-manifolds with $b_1=1$ for any torsion spin$^c$ structure, following \cite[Section $37.4$]{KM};
    \item the mapping tori of a finite order mapping class $\varphi$ of a surface $\Sigma$ of genus $g\geq 2$ with $\Sigma/\varphi=\mathbb{P}^1$, equipped with a self-conjugate spin$^c$ structure, following \cite{LinTheta};
    \item the product of a circle with a surface of genus $2$ or $3$ for the unique torsion spin$^c$ structure, following \cite{spectrally-large}.
\end{itemize}
In this more general situation, we have the following integral rigidity result for the sum of the invariants corresponding to spin$^c$ structures on $X$ restricting to the given one on $Y$.

\begin{theoremint}\label{thm:weak-calculation}
Suppose $X$ is a closed, oriented, connected $4$-manifold with $b^+(X) \ge 1$. Suppose further that $ Y \subset X$ is a non-separating hypersurface equipped with a torsion spin$^c$ structure with $(Y,\spin_Y)$ an $RSF$-space. Then for each spin$^c$ structure $\spin_W$ on $W$ with $i^* \spin_W \cong j^* \spin_W = \spin_Y$  the Seiberg--Witten invariants of $X$ satisfy $$ \sum_{\substack{\spin_X\lvert_{W} = \spin_W}} \mathfrak m(X,\spin_X) = \begin{cases} c(W,Y, \spin_Y) &\text{if } b^+(W) = 0 \text{ and } b_1(W) = b_1(Y)\text{ and } d(\spin_W) = 0 \\ 0 & \text{otherwise}. \end{cases}$$ where $c(W,Y, \spin_Y)\in\mathbb{Z}$ is a quantity that depends only on the the correspondence (\ref{eq:correspondence}) and the spin$^c$ structure $\spin_Y$ on $Y$.
\end{theoremint}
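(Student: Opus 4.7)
Following the outline in the introduction, I would begin by cutting $X$ along $Y$ to get a cobordism $W : Y \to Y$ and invoking an appropriate Kronheimer--Mrowka pairing theorem. For a local coefficient system $\Gamma$ on $(Y,\spin_Y)$ analogous to the $\Gamma_\eta$ used in the torus case, the left-hand side $\sum_{\spin_X|_W = \spin_W} \mathfrak{m}(X,\spin_X)$ should equal the categorical trace of the cobordism-induced endomorphism $\hat F_{W,\spin_W} : \HMf_*(Y,\spin_Y;\Gamma) \to \HMf_*(Y,\spin_Y;\Gamma)$. The choice of $\Gamma$ together with the summation over spin$^c$ structures on $X$ restricting to $\spin_W$ serve precisely to package the Floer-theoretic data into a well-defined finite trace; this is the four-dimensional analogue of Donaldson's TQFT trace that produces the Alexander polynomial.

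\textbf{Step 2: RSF reduction to $\HMb$.} The hypothesis that $(Y,\spin_Y)$ is an RSF-space yields, by definition, a chain model $\hat C_*(Y,\spin_Y)$ whose generators are all reducible, together with a strict filtration by $\bar C_*(Y,\spin_Y)$ whose associated graded is explicit. The crucial consequence is that any cobordism-induced chain endomorphism respects this filtration, so that in homology $\operatorname{tr}(\hat F_{W,\spin_W})$ coincides with the trace of its associated graded. The latter involves only reducible solutions on $W$ and is computed by the corresponding map on $\HMb_*$, reducing the calculation to a purely reducible-level problem on the cobordism.

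\textbf{Step 3: associated graded via the correspondence, and case analysis.} I would then apply the concrete description of the cobordism map on $\HMb$ announced in the introduction: its associated graded is determined purely by the correspondence (\ref{eq:correspondence}) together with the spin$^c$ data, and taking its trace produces an integer $c(W,Y,\spin)$ whose dependence is exactly as required. The case split follows from standard dimension counts on the reducible moduli space of $W$: if $b^+(W) > 0$, a generic perturbation removes all reducibles and the trace vanishes; if $b_1(W) \ne b_1(Y)$, the correspondence cannot have full rank and no nonzero diagonal contribution survives; and if $d(\spin_W) \ne 0$, the reducible moduli space has the wrong expected dimension so the relevant pairing vanishes on degree grounds. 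The hard part will be Step 3: the concrete description of the associated graded requires a careful analysis of the cylindrical-end reducibles on $W$, an Atiyah--Patodi--Singer-type index computation to extract the relevant Dirac-operator data, and a verification that the RSF strict filtration is compatible with the cobordism operator so that no off-diagonal filtration terms contaminate the trace. Steps 1 and 2 are by comparison essentially formal, once the correct local system is in place.
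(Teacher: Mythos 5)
Your proposal follows essentially the same route as the paper's proof: the self-gluing trace formula with local coefficients, the RSF filtration on $\hat{C}_*(Y,\spin)$, the identification of the associated graded of the cobordism map with the correspondence-induced Morse-theoretic map $U^d m_{W,\nu}$ coming from the reducible (bar-level) computation, the principle that the trace of a filtered map equals the trace of its associated graded, and the same three-way case analysis. The only caveats are of emphasis rather than substance: the filtration-compatibility of the cobordism map is \emph{not} a definitional consequence of the RSF condition (which only constrains $\partial^u_s$ on the cylinder over $Y$) but is precisely the content of Theorem \ref{thm:constant-term} combined with the comparison map $p = 1 + \partial^u_s$ — your Step 3 correctly flags this cylindrical-end/APS analysis as the hard work, so Step 2's ``by definition'' overstates it — and the remaining bookkeeping in the paper (Hopf's trace formula together with the vanishing of the local-coefficient Morse homology, which confines the trace to finitely many negative filtration levels, the $t\to 0$ limit passing from the weighted gluing formula to the unweighted sum of invariants, the $b^+(X)=1$ wall-crossing check, and a degree argument rather than a rank argument when $b_1(W)\neq b_1(Y)$) is routine but does need to be carried out as in the paper.
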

\begin{remarkint}
When $b^+=1$, one in general has two Seiberg--Witten invariants depending on the side of the wall, but it turns out that in the setup of the theorem they coincide, cf. Lemma \ref{nowallcrossing} below. One could get a statement involving all Seiberg--Witten invariants like Theorem \ref{cor:det-squared} for homology $S^2 \times T^2$ containing nonseparating flat submanifolds with $b_1 = 1$, if one were willing to be somewhat more careful about the chamber structure in Lemma \ref{lemma:Thurston} when $b^+(X) = 1$ (cf. \cite[Section $27.5$]{KM}).
\end{remarkint}

The quantity $c(W,Y,\spin_Y)$ can be computed explicitly provided one has a concrete understanding of the Floer theory of $(Y,\spin_Y)$; this can be done for the examples pointed out above, see Section \ref{c-comp}.
\\
\par
It is interesting to compare the notion of $RSF$-space, which assumes $b_1(Y) > 0$, to related conditions for $b_1(Y) = 0$. Among manifolds with $b_1(Y) = 0$, those which support a metric and with no irreducible solutions are called \emph{minimal $L$-spaces}. The relevant result is then the following: if $X$ has non-vanishing Seiberg--Witten invariants and it contains a minimal $L$-space as a separating hypersurface then one of the two sides must have $b^+ = 0$. Of course, this result still applies to a much broader class of rational homology spheres, the \emph{$L$-spaces}, which may admit irreducible solutions but still have trivial reduced Floer homology. It would be interesting if our $RSF$-spaces were akin to the minimal $L$-spaces: members of a useful family of manifolds defined by a weaker, homology-level condition. 

\begin{questint}
Is there a more general notion than $RSF$-space, defined by a homology-level condition, which guarantees rigidity results along the lines of Theorem \ref{thm:weak-calculation}?
\end{questint}

To conclude, the Ozsv\'ath--Szab\'o mixed invariants \cite{OzsSza4man} are conjectured to be equal to the Seiberg--Witten invariants. Even though the three-manifold invariants in Heegaard and monopole Floer homology are isomorphic (\cite{CGH,KLT} and subsequent papers), the theories contain very different chain-level information. It would therefore be interesting to reprove some of our results for in that context, starting for example from the following.

\begin{questint}
Can Theorem \ref{thm:torus-rigidity} be proved for the Ozsv\'ath--Szab\'o mixed invariants? 
\end{questint}

A proof of this might suggests some generalization of our results in the spirit of Question D. Notice that the Ozsv\'ath--Szab\'o invariants are currently 
only known to be well-defined with values in $\mathbb{Z}/2$, but a positive answer to Question E mod $2$ would be interesting nonetheless.
\vspace{0.3cm}

\textbf{Organization of the paper.} In Section \ref{gluing}, we state a self-gluing formula (well-known to experts) for the Seiberg--Witten invariants of $X$ in terms of the trace of the map induced on the Floer homology of a separating hypersurface $Y$ by its complement. In Section \ref{negative}, we give a chain-level description of the associated graded map in $\HMb_*$ induced by a negative definite cobordism. This is a key computation behind our results, and might be of independent interest. In Section \ref{RSF}, we introduce $RSF$-spaces and prove the general integral rigidity result, Theorem \ref{thm:weak-calculation}. In Section \ref{c-comp} we discuss some concrete computations involving it. In Section \ref{torus}, we specialize to the case of $Y=T^3$ and deduce Theorem \ref{thm:torus-rigidity} and Theorem \ref{cor:det-squared} above.

\section{Gluing formulas in the non-separating case}\label{gluing}

Suppose $X$ is a closed oriented 4-manifold with $b^+(X) \ge 2$, and that $Y \subset X$ is a non-separating hypersurface; write $W: Y \to Y$ for the complement, considered as a cobordism. Write $i,j: Y \hookrightarrow W$ for the inclusion of the two boundary components. In order to relate the Seiberg--Witten invariants on a closed 4-manifold $X$ to the induced map of the complementary cobordism $W$, we need a self-gluing formula, which we now state. We follow quite closely the discussion of formal properties in \cite[Chapter $3$]{KM}, to which we refer for more details.

Consider the Seiberg--Witten generating function
\begin{equation*}
\mathfrak m(X, \spin_X, h) = \mathfrak{m}(X,\spin_X)e^{\langle c_1(\spin_X),h\rangle}\text{ for } h\in H_2(X;\mathbb{R}).
\end{equation*}
Note that $\mathfrak{m}(X,\spin_X)$ is nonzero only when the formal dimension $d(\spin_X)$ is zero. 

If $\spin_W$ is a spin$^c$ structure on $W$ so that $i^* \spin_W \cong j^* \spin_W$, it is isomorphic to the restriction of many spin$^c$ structures $\spin_X$ on $X$. It will be useful to consider sums such as $$\mathfrak m(X, \spin_W, h) \overset{\text{def}}{=} \sum_{ \spin_X\lvert_W \cong \spin_W} \mathfrak m(X, \spin_X, h).$$
Moving to gluing formulas, let us consider a possibly non-torsion spin$^c$ structure $\spin_Y$ on $Y$. Suppose now that $h=[\bar \nu]$ for a $2$-cycle $\bar \nu \in C_2(X; \Bbb R)$, and consider $\eta=\bar \nu\cap Y$; for generic choice of $\bar{\nu}$, this is a $1$-cycle in $Y$. Cutting open, we obtain a $2$-chain $\nu$ in $W$ with $\partial\nu=-\eta\sqcup \eta$.
Assuming $[\eta]\neq 0\in H_1(Y;\mathbb{R})$, the completed bar Floer homology group with local coefficients $\HMb_{\bullet}(Y,\spin_Y;\Gamma_\eta)$ vanishes, so that the natural map $$\HMt_\bullet(Y, \spin_Y; \Gamma_\eta) \to \HMf_\bullet(Y, \spin_Y; \Gamma_\eta)$$ is an isomorphism and hence both groups are identified with the the \textit{reduced Floer homology group} $$\mathit{HM}_\bullet(Y, \spin_Y; \Gamma_\eta) = \text{Im}\left(\HMt_\bullet(Y, \spin_Y; \Gamma_\eta) \to \HMf_\bullet(Y, \spin_Y; \Gamma_\eta)\right).$$
The chain $\nu$ in $W$ gives rise to the morphism $\Gamma_\nu$ of local systems $\Gamma_\eta\rightarrow \Gamma_\eta$, which determines the map $\mathit{HM}_\bullet(W, \spin_W; \Gamma_\nu)$. Recalling that the Floer homology groups have a canonical $\mathbb{Z}/2\mathbb{Z}$-grading, we can state the self-gluing equation as follows.
\begin{prop}\label{prop:gluing}
With notation as above, assuming $[\eta]\neq 0$, we have an equality
\begin{equation*}
\mathfrak{m}(X, \spin_W,[\bar \nu]) = \textup{Tr}\left(\mathit{HM}_{\bullet}(W,\spin_W;\Gamma_{\nu})\right) = \textup{Tr}\left(\HMf_{\bullet}(W,\spin_W;\Gamma_{\nu})\right)
\end{equation*}
where $\textup{Tr}$ denotes the supertrace (or `alternating trace').
\end{prop}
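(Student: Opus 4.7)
The plan is to interpret the closed manifold $X$ as obtained from the cobordism $W \colon Y \to Y$ by identifying its two boundary components, and to derive the trace formula by a standard neck-stretching argument adapted to this self-gluing setup. Concretely, one inserts a long cylindrical neck $[-T,T] \times Y$ into $X$ along the hypersurface, and examines the behavior of Seiberg--Witten moduli spaces as $T \to \infty$. The hypothesis $[\eta] \neq 0$ ensures $\HMb_*(Y,\spin_Y;\Gamma_\eta) = 0$, which is what makes the self-gluing computation tractable: in the limit, only finite-energy configurations contribute, and the reducibles on the cylinder are killed by the local system.

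I would then invoke the formal TQFT package from \cite[Chapter 3]{KM}. To $Y$ with local coefficients $\Gamma_\eta$ is associated the finite-dimensional vector space $\HMf_*(Y,\spin_Y;\Gamma_\eta)$ (finite-dimensional because $\HMb$ vanishes), and to $W$ with the chain $\nu$ the endomorphism $\HMf_*(W,\spin_W;\Gamma_\nu)$. The separating gluing formula expresses pairings $\langle \Psi(X_1),\Psi(X_2)\rangle$ as Seiberg--Witten counts on $X_1\cup_Y X_2$; the self-gluing version is obtained by choosing a homogeneous basis $\{e_\alpha\}$ of $\HMf_*(Y;\Gamma_\eta)$, writing $\HMf_*(W,\spin_W;\Gamma_\nu)$ as a matrix in this basis, and observing that the neck-stretching decomposition expresses $\mathfrak m(X,\spin_W,[\bar\nu])$ as the signed sum of the diagonal entries, with the sign dictated by the $\mathbb{Z}/2$-grading. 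This is precisely the supertrace. The assembly of local coefficients is consistent because $\Gamma_\nu$ on $W$ and the canonical identification $\Gamma_\eta \cong \Gamma_\eta$ on the closing cylinder glue to the local system on $X$ determined by $\bar\nu$.

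For the second equality, observe that the assumption $\HMb_*(Y,\spin_Y;\Gamma_\eta)=0$ forces, via the fundamental exact triangle, the natural map $\HMt_*(Y;\Gamma_\eta)\to\HMf_*(Y;\Gamma_\eta)$ to be an isomorphism, with both identified with the reduced Floer group $\mathit{HM}_*(Y,\spin_Y;\Gamma_\eta)$. Since cobordism maps are compatible with this triangle, the endomorphisms induced by $W$ on $\HMt$, $\HMf$, and the reduced theory all agree under this identification, and in particular their supertraces coincide.

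The main obstacle I expect is the analytic self-gluing statement itself: making precise the bijection (with signs) between Seiberg--Witten solutions on $X$ with a long neck and matching pairs consisting of a finite-energy monopole on $W$ with cylindrical ends together with a Floer trajectory on $\mathbb{R}\times Y$ whose two asymptotic limits coincide. The orientation conventions that produce the \emph{super}trace rather than an ordinary trace need to be tracked carefully, and one must rule out bubbling and reducible contributions uniformly in $T$; the former uses $b^+(X)\ge 2$ (no wall-crossing) and the latter is where $[\eta] \neq 0$ enters analytically. Once this self-gluing theorem is in hand, the trace identity follows by the formal argument above.
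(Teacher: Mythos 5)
Your overall architecture (neck-stretching plus the formal TQFT package, and the exact-triangle argument identifying $\HMt_*$, $\HMf_*$ and the reduced group when $\HMb_*(Y,\spin;\Gamma_\eta)=0$) is the standard shape, and the second equality is handled exactly as in the paper. But there is a genuine gap in the mechanism you propose for the first equality: you assert that "the reducibles on the cylinder are killed by the local system" and that $[\eta]\neq 0$ is "where reducible contributions are ruled out analytically." The local coefficient system $\Gamma_\eta$ is a chain-level weighting; the vanishing of $\HMb_*(Y,\spin;\Gamma_\eta)$ is a homological statement and does not remove reducible critical points on $Y$, reducible trajectories on $\mathbb{R}\times Y$, or reducible solutions on $W^*$ from the moduli spaces when one works with balanced perturbations. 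Indeed, the chain-level endomorphism computing $\HMf_*(W,\spin_W;\Gamma_\nu)$ genuinely counts reducible solutions on $W^*$ (this is emphasized in Section 3 of the paper), whereas $\mathfrak m(X,\spin_W,[\bar\nu])$ counts only irreducibles on $X$ (none are reducible since $b^+(X)\ge 2$ and the perturbation is generic). So the matching-pairs bijection you want to establish in the $T\to\infty$ limit cannot hold at face value: broken limits passing through reducibles on $Y$, and reducible pieces on $W$, are present and must be accounted for, and nothing in your setup makes them cancel in the supertrace.

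The missing idea — and the route the paper indicates, following \cite[Chapter 32]{KM} (cf. \cite[Proposition 3.9.3]{KM}) — is to replace the balanced perturbation by a non-balanced (non-exact) perturbation whose period class is chosen so that there are \emph{no reducible critical points at all} on $(Y,\spin)$; then all flavors of Floer homology coincide, the cobordism map for $W$ counts only irreducible solutions, and the neck-stretching/gluing bijection you sketch is clean, with the supertrace arising from summing over a basis and tracking the $\mathbb{Z}/2$-grading. One then transfers the statement back to the local-coefficient groups via the isomorphism between $\mathit{HM}_*(Y,\spin;\Gamma_\eta)$ and the Floer homology with non-balanced perturbations, which is natural with respect to cobordism maps. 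Without this comparison step, the analytic self-gluing statement you defer to is not merely technical: as formulated, with balanced perturbations and $\Gamma_\eta$, it is false at the level of moduli spaces, and the $[\eta]\neq 0$ hypothesis enters through this perturbation/isomorphism argument rather than through any direct exclusion of reducibles.
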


One should compare this with the case of a separating hypersurface treated in the \cite[Proposition 3.9.3]{KM}; in particular, in the same way that result allows the pieces to have $b^+=0$, our proposition also holds when $W$ is negative definite. The proof of this self-gluing result very closely follows the separating case discussed in \cite[Chapter $32$]{KM}, and involves comparing the isomorphism between $\mathit{HM}_{\bullet}(Y,\spin_Y;\Gamma_\eta)$ and some corresponding Floer homology groups with non-balanced perturbations (which only involves irreducible solutions) introduced in \cite[Chapter $30$]{KM}. Regarding the analytical aspects of the proof, even though \cite{KM} only deals with gluing results for moduli spaces under neck-stretching along separating hypersurfaces (see Chapters $26$ and $31$), the proofs of the relevant statements carry over to the non-separating setup without significant difficulties.
\begin{remark}
We will be only considering spin$^c$ structures which are torsion in the rest of the paper, so we can equivalently work with the uncompleted version of the groups $\mathit{HM}_*(Y, \spin_Y; \Gamma_\eta)$.
\end{remark}
A similar gluing formula holds in the case of a $4$-manifold $X$ with $b^+=1$ containing a non-separating hypersurface $Y$. In this setup one needs to be careful because there are two (usually distinct) invariants depending on the side of the wall one is looking at. On the other hand, in the situation of torsion spin$^c$ structures which is of interest to us we have the following.

\begin{lemma}\label{nowallcrossing}
Suppose $X$ has $b^+=1$ and contains a non-separating hypersurface $Y$. If $\spin_X$ is a spin$^c$ structure such that $\spin_X\lvert_Y$ is torsion, the Seiberg--Witten invariant $\mathfrak m(X, \spin_X)$ is independent of the side of the wall.
\end{lemma}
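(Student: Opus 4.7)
I plan to extend the self-gluing formula of Proposition \ref{prop:gluing} to the $b^+(X)=1$ setting using a local coefficient system on $Y$, and then deduce chamber independence from the intrinsic nature of the resulting Floer-theoretic supertrace.

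First I would establish the $b^+(X)=1$ analog of Proposition \ref{prop:gluing}: for any $2$-cycle $\bar\nu \subset X$ with $[\eta] \neq 0$ in $H_1(Y;\mathbb{R})$, where $\eta = \bar\nu \cap Y$, one has
\begin{equation*}
\mathfrak{m}(X, \spin_W, [\bar\nu]) = \textup{Tr}\bigl(\HMr_*(W, \spin_W; \Gamma_\nu)\bigr),
\end{equation*}
with both sides independent of the chamber on $X$. The proof would follow \cite[Chapter 32]{KM} closely; the crucial input is that $\HMb_*(Y, \spin_Y; \Gamma_\eta) = 0$ whenever $[\eta]\neq 0$, so the reduced Floer homology coincides with both the ``to'' and ``hat'' variants and, crucially, is insensitive to the balanced-versus-unbalanced end perturbation associated to each chamber on $X$; the resulting supertrace is therefore intrinsic.

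To extract chamber independence of each individual $\mathfrak{m}(X,\spin_X)$, I would use that the wall-crossing difference $\mathfrak{m}^+(\spin_X)-\mathfrak{m}^-(\spin_X)$ is a topological quantity depending only on the real first Chern class $c_1(\spin_X) \in H^2(X;\mathbb{R})$. The chamber-independent identity
\begin{equation*}
\sum_{\spin_X\lvert_W \cong \spin_W}\bigl(\mathfrak{m}^+(\spin_X) - \mathfrak{m}^-(\spin_X)\bigr) \cdot e^{\langle c_1(\spin_X), [\bar\nu]\rangle} = 0,
\end{equation*}
valid for all admissible $\bar\nu$, then separates via the linear independence of distinct characters $\bar\nu \mapsto e^{\langle c_1, [\bar\nu]\rangle}$ (grouping the sum by the real class $c_1(\spin_X)$) into the vanishing of $(\mathfrak{m}^+-\mathfrak{m}^-)(\spin_X)$ for every $\spin_X$ in question.

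The main obstacle is producing a $1$-cycle $\eta \subset Y$ with $[\eta]\neq 0 \in H_1(Y;\mathbb{R})$: this requires $b_1(Y) > 0$, which is not automatic from $Y$ being non-separating, though it does hold in every case relevant to this paper. The degenerate case $b_1(Y) = 0$, in which $\spin_X\lvert_Y$ torsion is automatic, must be handled separately---most directly by a straightforward application of the standard $b^+(X)=1$ wall-crossing formula, which in that setting is easily seen to vanish.
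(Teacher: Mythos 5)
Your main route has a genuine gap at its first step: you assume, rather than prove, a chamber-independent self-gluing formula at $b^+(X)=1$. The neck-stretching argument behind Proposition \ref{prop:gluing} (following \cite[Section 27.5]{KM}) computes the invariant in one specific chamber, namely the one singled out by the limiting period point of the long-neck metrics together with the sign of the perturbation data; this is exactly why \cite[Proposition 27.5.1]{KM} is stated in terms of carefully chosen invariants $\mathfrak m_k$ rather than a chamber-free $\mathfrak m$. Your justification --- that $\HMb_*(Y,\spin_Y;\Gamma_\eta)=0$ makes the supertrace ``intrinsic,'' hence equal to the invariant in both chambers --- is precisely the assertion of the lemma in disguise: since $\spin_X\lvert_Y$ is torsion, one checks $(c_1(\spin_X)\cup k)[X]=0$ for $k$ in the image of $H^1(Y)\to H^2(X)$, so one is in the borderline case where the chamber picked out by the gluing depends on the sign of the local-coefficient datum $\nu$; showing the two choices give the same answer is not free and would require redoing the gluing analysis with chamber bookkeeping. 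So the argument is circular as written (and indeed the paper proves this lemma \emph{before} stating the $b^+=1$ gluing formula, so that the latter can be phrased without chambers). A secondary issue: your character-separation step only distinguishes spin$^c$ structures by the image of $c_1(\spin_X)$ in $H^2(X;\mathbb R)$ paired against admissible classes $[\bar\nu]$, so it isolates sums over spin$^c$ structures with the same real $c_1$ and the same restriction to $W$; this is repairable because the Li--Liu wall-crossing difference depends only on $c_1(\spin_X)$, but it needs to be said.

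The paper's proof is much more direct and is, in effect, the argument you relegate to your ``degenerate case'': by \cite[Corollary 1.3]{LiLiu}, the difference of the two invariants is the Pfaffian of the skew-symmetric matrix with entries $\langle y_i\cup y_j\cup c_1(\spin_X),[X]\rangle/2$ for a basis $\{y_i\}$ of $H^1(X;\mathbb Z)$. Choosing the basis via the Mayer--Vietoris sequence so that $y_1$ is Poincar\'e dual to $Y$ and representing the other $y_k$ by $3$-manifolds $Y_k$ transverse to $Y$, each entry in the first row and column is the evaluation of $c_1(\spin_X)/2$ on the surface $Y\cap Y_k\subset Y$, which vanishes because $\spin_X\lvert_Y$ is torsion; hence the Pfaffian vanishes. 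Note this argument needs no hypothesis $b_1(Y)>0$ and no gluing theory at all, so the split into cases in your plan is unnecessary: the ``straightforward application of the wall-crossing formula'' you reserve for $b_1(Y)=0$ is exactly what settles the lemma in general.
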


\begin{proof}
This follows from the general wall-crossing formula \cite[Corollary 1.3]{LiLiu}. The dimension of the Seiberg--Witten moduli space has the same parity as $1 - b_1 + b^+ \equiv b_1$, so one can assume $b_1(X)$ is even. Then, for any basis $\{y_i\}$ of $H^1(X;\mathbb{Z})$ the authors show for the spin$^c$ structure $\spin_X$ the difference between the two invariants is the Pfaffian of the $(b_1\times b_1)$-dimensional skew-symmetric matrix with $(i,j)$-th entry
\begin{equation}\label{pfaff}
\langle y_i\cup y_j\cup c_1(\spin_X),[X]\rangle/2.
\end{equation}
Now consider the Mayer-Vietoris type exact sequence
\begin{equation*}
0\rightarrow \mathbb{Z}\rightarrow H^1(X)\rightarrow H^1(W)\stackrel{i^*-j^*}{\longrightarrow}H^1(Y).
\end{equation*}
Because the groups involved are free abelian, we can choose a basis of $H^1(X)$ for which $y_1$ corresponds to $\mathbb{Z}$ and $y_k$ corresponds to the kernel of $i^*-j^*$ for $k\geq 2$. Then $Y$ is Poincar\'e dual to $y_1$, and we can represent $y_k$ by smooth $3$-manifolds $Y_k$ transverse to $Y$. When one of $i,j$ is one, the quantity in (\ref{pfaff}) can be interpreted as the evaluation of $c_1(\spin_X)/2$ on $Y\cap Y_k$, which is zero because the restriction of $\spin_X$ to $Y$ is torsion. Hence the matrix has first row and column zero, so that its Pfaffian is zero.
\end{proof}

Hence for a spin$^c$ structure restricting to a torsion one on $Y$ we can refer unambiguously to its Seiberg--Witten invariant $\mathfrak{m}(X,\spin_X)$ even when $b^+(X) = 1$; because this will be the case concerning us in the paper, we will state the self-gluing formula in this context. As before we will consider, for a spin$^c$ structure $\spin_W$ on $W$ restricting to $\spin$ on both ends, the quantity $\mathfrak{m}(X,\spin_W,[\nu])$. We then have the following.
\begin{prop}
Proposition \ref{prop:gluing} continues to hold when $b^+(X) = 1$ and $\spin_W$ restricts to a torsion spin$^c$ structure on $Y$.
\end{prop}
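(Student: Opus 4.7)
The plan is to reduce to the proof of Proposition \ref{prop:gluing} in the $b^+(X)\geq 2$ case, with Lemma \ref{nowallcrossing} supplying the one new ingredient needed when $b^+(X)=1$. When $b^+(X)=1$, the Seiberg-Witten count $\mathfrak m(X,\spin_X)$ a priori depends on a choice of chamber, but Lemma \ref{nowallcrossing} shows this dependence is absent whenever $\spin_X\lvert_Y$ is torsion. Consequently, the left-hand side $\mathfrak m(X,\spin_W,[\bar\nu])$ is well-defined without any chamber choice, and we may compute it in whichever chamber is most convenient.

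The bulk of the proof then runs exactly as in the $b^+(X)\geq 2$ case, following the discussion in \cite[Chapter 32]{KM}. Concretely, I would stretch the neck along $Y$ to obtain a family of metrics $g_T$ agreeing with $dt^2 + g_Y$ on a collar $[-T,T]\times Y$ and choose a small generic compactly-supported perturbation. The standard compactness-and-gluing analysis identifies the (perturbed) Seiberg-Witten moduli space on $(X,g_T)$ for $T$ large with the appropriate fiber product over Floer critical points on $Y$. The vanishing of $\HMb_{*}(Y,\spin;\Gamma_\eta)$ for $[\eta]\neq 0$ forces the ends of the broken trajectories contributing to the count to be irreducible, which is exactly what allows the count on $X$ to be reassembled into the supertrace of the Floer cobordism map induced by $(W,\spin_W)$ with the local system morphism $\Gamma_\nu$. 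This produces both equalities in the proposition.

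The one point that requires genuine attention when $b^+(X)=1$ is chamber bookkeeping: the neck-stretching family $g_T$ produces a path of normalized self-dual harmonic forms $\omega_{g_T}^+/\|\omega_{g_T}^+\|$ in $H^2_+(X;\mathbb{R})$ which in principle could cross the wall $c_1(\spin_X)^\perp$, leading to an ambiguity by the Pfaffian wall-crossing term of \cite{LiLiu}. The content of Lemma \ref{nowallcrossing} is precisely that this term vanishes whenever $\spin_X\lvert_Y$ is torsion, so the chamber determined by any large-$T$ neck-stretched metric computes the same invariant as any other choice, and we may identify the count produced by the neck-stretching analysis with $\mathfrak m(X,\spin_W,[\bar\nu])$. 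With this chamber issue resolved, the rest of the argument is verbatim that of \cite[Chapter 32]{KM}, and the main obstacle is thus essentially the verification already carried out in Lemma \ref{nowallcrossing}.
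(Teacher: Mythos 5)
Your proposal matches the paper's argument: the paper likewise disposes of the $b^+(X)=1$ case by invoking Lemma \ref{nowallcrossing} to make $\mathfrak m(X,\spin_X)$ chamber-independent when $\spin_X\lvert_Y$ is torsion, and then runs the same neck-stretching/gluing argument as for $b^+\geq 2$, citing the corresponding $b^+=1$ statement in \cite[Section 27.5]{KM}. Your extra remarks on tracking the chamber determined by the stretched metrics are consistent with, and merely elaborate on, that reduction.
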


This follows as in the case of $b^+\geq 2$; the corresponding result for separating hypersurfaces is \cite[Proposition $27.5.1$]{KM}.

\begin{remark}
The analogue of Proposition \ref{prop:gluing} for the four-manifold invariants in Heegaard Floer homology (with values in $\mathbb{Z}/2$) can be found in \cite[Theorem 1.5(2)]{Zemke}.
\end{remark}

\section{The map induced by a negative definite cobordism}\label{negative}
Given a spin$^c$ cobordism $(W,\spin_W)$ between torsion spin$^c$ three-manifolds $(Y_{\pm},\spin_\pm$), in this section we are concerned with the induced map
\begin{equation*}
        \HMb_{*}(W,\spin_W):\HMb_{*}(Y_-,\spin_-)\rightarrow \HMb_{*}(Y_+,\spin_+).
\end{equation*}
When $b^+(W)>0$, this vanishes \cite[Proposition 27.2.4]{KM}, so we will always assume that $W$ is negative definite. More generally, we will be interested in the case of Floer homology groups equipped with local systems $\Gamma_{\eta_\pm}$.
\par
The simplest case, in which both $Y_\pm$ are rational homology spheres and $b_1(W)=0$, is discussed in \cite[Proposition 39.1.2]{KM}. In this situation, we have the natural identification
\begin{equation*}
\HMb_*(Y_\pm,\spin_\pm)\cong \mathbb{Z}[U,U^{-1}]
\end{equation*}
of absolutely graded $\mathbb{Z}[U]$-modules (up to an overall grading shift, where $U$ has grading $-2$), and the map $\HMb_*(W,\spin_W)$ is an isomorphism. This computation is the key result underlying the topological applications of the Fr\o yshov invariant (see also \cite{FroQHS3}).
\begin{remark}
In the Heegaard Floer setting, the analogue of this computation can be found in \cite{OzSzabs}. Some special cases of the map induced by a negative definite cobordism can be found in \cite{BehGol,LevRub1,LevRub2} with applications to generalized correction terms. For the monopole counterparts of the latter, see \cite{Kru}.
\end{remark}
We will consider the general case in which $b_1(Y_\pm)$ and $b_1(W)$ are arbitrary. Associated to each of these manifolds are the tori
\begin{align*}
\mathbb T_{Y_\pm}&=H^1(Y_\pm; i\mathbb R)/H^1(Y_\pm; 2\pi i \mathbb Z)\\
\mathbb T_{W}&=H^1(W; i\mathbb R)/H^1(W; 2\pi i \mathbb Z).
\end{align*}
By Hodge theory we can identify the former (after choosing a basepoint) as the space of gauge equivalence classes of flat spin$^c$ connections on $(Y_\pm,\spin_\pm)$; we will discuss an analogous interpretation of the latter later. The diagram of maps $$\mathbb T_{Y_-} \xleftarrow{i_-^*} \mathbb T_W \xrightarrow{i_+^*} \mathbb T_{Y_+}$$ induced by the inclusion maps $i_{\pm}$ will play a crucial role in our calculation.
\par
Finally, our discussion will also apply to more general local systems and morphisms between them, but for simplicity we will focus on the local systems of the form $\Gamma_\eta$ for a real 1-cycle $\eta \in C_1(Y; \Bbb R)$ introduced in \cite[Section $3.7$]{KM}; these suffice for the cases of interest to us.

\subsection{Review of the three-manifold case} Let $Y$ be a three-manifold. As in \cite[Section $35.1$]{KM}, given a metric on $\mathbb T_Y$ and a Morse function $f: \mathbb T_Y \to \mathbb R$, one may choose an appropriate perturbation of the Seiberg--Witten equations so that all reducible critical points and trajectories are cut out transversely. In this situation, Kronheimer and Mrowka give a description of $\bar{C}_*(Y,\spin)$ in terms of `coupled Morse theory'. The structure of this complex is determined in \cite[Section 33.3]{KM}, which we record.

\begin{lemma}\label{lemma:Morse-iso}
In the situation above, one has an isomorphism of relatively $\Bbb Z$-graded complexes over $\mathbb Z[U]$ $$\bar{C}_*(Y, \mathfrak s; \Gamma_\eta) \cong C_*(\mathbb T_Y, f; \Gamma_\eta)[U, U^{-1}].$$ With respect to this isomorphism, the differential $\overline \partial$ is sent to $$\partial_f + \partial_3 U^{-1}+\partial_5 U^{-2}+\dots$$ with $\partial_f$ the Morse differential on $C_*(\mathbb T_Y, f; \Gamma_\eta)$.
\end{lemma}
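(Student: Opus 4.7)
This lemma records a well-known structural result from Kronheimer--Mrowka \cite[Section 33.3]{KM}, and the plan is to reproduce the key steps of their analysis of the reducible locus. The argument splits naturally into three parts: setting up the perturbation, identifying the generators, and computing the differential.

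First, I would set up the coupled Morse perturbation. Using the holonomy map $\mathcal{B}(Y,\spin) \to \mathbb{T}_Y$, pull $f$ back to a function on the configuration space, and combine this with a small generic perturbation of the Seiberg--Witten equations which is transverse on the reducible locus. Then all reducible critical points and reducible trajectories are cut out transversely, while the restriction of the functional to $\mathbb{T}_Y$ agrees with $f$ up to constant.

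Second, I would identify the generators and $\mathbb{Z}[U,U^{-1}]$-module structure. On the blown-up configuration space, the reducible critical points come as pairs $(\alpha,\phi_k)$ with $\alpha \in \mathbb{T}_Y$ a critical point of $f$ and $\phi_k$ a unit eigenvector of the perturbed Dirac operator $D_\alpha$, indexed by $k \in \mathbb{Z}$ running through its real eigenvalues. The $U$-action corresponds to the shift $k \mapsto k+1$ in the spectral index and lowers the relative grading by $2$. The local system $\Gamma_\eta$ pulls back from $\mathbb{T}_Y$ under the holonomy map, so restricted to the reducibles it agrees with the Morse-theoretic local system on $\mathbb{T}_Y$. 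This gives the stated isomorphism of $\mathbb{Z}[U,U^{-1}]$-modules.

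Third, I would analyze the differential. A reducible trajectory from $(\alpha,\phi_k)$ to $(\beta,\phi_\ell)$ projects to a gradient flowline of $f$ on $\mathbb{T}_Y$, while the spectral flow of the perturbed Dirac family along this flowline records the jump $k-\ell$. The crucial structural point is that reducible trajectories only contribute when the spectral flow is non-negative, giving $\overline{\partial}$ its $U$-adic upper-triangular form $\partial_f + \partial_3 U^{-1} + \partial_5 U^{-2} + \cdots$. The zero spectral flow contribution reduces to the Morse differential $\partial_f$ on $C_*(\mathbb{T}_Y,f;\Gamma_\eta)$, while $\partial_{2j+1}$ is a degree $-(2j+1)$ operator that counts reducible trajectories of spectral flow exactly $j$. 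The hard part, handled in detail in \cite[Section 33.3]{KM}, is establishing transversality and compactness for these coupled Morse--spectral trajectory spaces along with the non-negativity of spectral flow, which together guarantee the $U$-adic triangular structure and make the leading term identifiable with $\partial_f$.
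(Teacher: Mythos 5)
Your proposal is correct in outline and follows essentially the same route as the paper, which does not reprove this statement but records it from Kronheimer--Mrowka's coupled Morse theory \cite[Section 33.3]{KM}, together with the same identification of generators $(q,\lambda_i(q))\mapsto qU^{-i}$, the spectral-flow--compatible labeling of eigenvalues, and the pulled-back local system. The only caveats are cosmetic: the paper's convention has $U$ shifting the spectral label downward (so your ``$k\mapsto k+1$'' is a matter of indexing), and the triangular form is most cleanly seen from the grading constraint together with the Morse--Smale requirement that nonconstant flowlines drop index, which is equivalent to the positivity statement you phrase in terms of spectral flow.
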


In \cite[Section $34.2$]{KM} the authors further show that one can deform the family of operators (in a suitable space of operators on a Hilbert space) so that the for the resulting coupled Morse complex the terms $\partial_{2i+1}$ with $i\geq 2$ are zero. On the other hand, in the present paper we will work with small perturbations of the family (of geometric nature) so in general one should expect the higher terms to be non-vanishing. 
\par
It is worth being more precise about the identifications in this isomorphism. For each critical point $q \in \text{Crit}(f)$, choose a labeling of the eigenvalues of the Dirac operator $D_q$ by $$\cdots < \lambda_{-1}(q) < \lambda_0(q) < \lambda_1(q) < \cdots$$ For some $i$ we have $\lambda_{i-1}(q) < 0 < \lambda_i(q)$; we record $d(q) = i$ as the shift of this labeling from the natural labeling. Finally, we demand that these labelings respect the spectral flow, in the sense that $$\text{sf}(q_1, q_2) = d(q_2) - d(q_1);$$
this is possible because $c_1(\spin)$ torsion implies that there is no spectral flow around loops. Writing $$\tilde u_2: \bar C(Y, \mathfrak s; \Gamma_\eta) \to \bar C(Y, \mathfrak s; \Gamma_\eta)$$ for the chain map whose induced map on homology is the $U$-action, \cite[Lemma 33.3.9]{KM} guarantees $\tilde u_2$ is an isomorphism. The inverse of our identification sends $qU^i$ to $\tilde u_2^i(q, \lambda_0(q))$, which is equal to the sum of $(q, \lambda_{-i}(q))$ and critical points of strictly smaller Morse index.
\\
\par
For the purposes of the paper, we will \textit{not} be interested in the actual $U$-action on $\bar{C}_*(Y, \mathfrak s; \Gamma_\eta)$, but only on the natural filtration by powers of $U$, with
\begin{equation}\label{filtration}
\mathcal F_k \bar{C}_* \cong C_*(\mathbb T_Y, f)\otimes U^{\lfloor k/2\rfloor} \mathbb{Z}[U^{-1}],
\end{equation}
i.e. the terms with $U$-power $\leq k/2$. While this is not a filtration of $U$-modules, multiplication by $U$ sends each $\mathcal F_k$ isomorphically onto $\mathcal F_{k+2}$, and does induce the structure of a $\Bbb Z[U, U^{-1}]$-module on the pages of the associated spectral sequence. The $E^1$ page of the associated spectral sequence is precisely the Laurent polynomial ring over the Morse complex $C_*(\mathbb T_Y, f; \Gamma_\eta)[U, U^{-1}]$. 

\begin{remark}
The $U$-filtration is seen to be essentially equivalent to the filtration by \textit{Morse index}, $$\mathcal F^{Morse}_k = C_{* \le k}(\mathbb T_Y, f) \otimes \mathbb Z[U, U^{-1}];$$ we have $\mathcal F_k \bar C_d = \mathcal F^{Morse}_{d+k} \bar C_d.$ 
\end{remark}

In \cite[Section 35]{KM}, the authors determine the differential $\partial_3$ on the $E^3$ page of this spectral sequence in terms of the triple cup product
    \begin{align*}
    \cup_Y: \Lambda^3 H^1(Y;\mathbb{Z})&\rightarrow \mathbb{Z}\\
    \alpha\wedge\beta\wedge\gamma&\rightarrow \langle\alpha\cup\beta\cup\gamma,[Y]\rangle
\end{align*}
    and show that the higher differentials in this spectral sequence vanish over $\Bbb Q$. As a consequence, they establish a canonical isomorphism
\begin{equation}\label{bariscup}
\text{gr } \HMb_*(Y,\spin;\mathbb{Q})\cong {HC}_*(Y;\mathbb{Q})
\end{equation} where the former is the associated graded group of the $U$-filtration on homology, and the latter is the \textit{cup homology} \cite{Mar}. The cup homology is the homology of the chain complex whose underlying module is
\begin{equation*}
CC_*(Y)=\Lambda^* H^1(Y;\mathbb{Z})\otimes \mathbb{Z}[U^{-1},U]
\end{equation*}
and whose differential is given by
\begin{equation}\label{d3}
d^3(\omega\otimes U^n)= \iota_{\cup_Y^3}\omega\otimes U^{n-1},
\end{equation}
where $\iota_{\cup_Y^3}$ is the contraction with the triple cup product $\cup_Y^3$ sending
$\alpha_1\wedge\dots\wedge\alpha_k$ to
\begin{equation*}
\sum_{i_1<i_2<i_3} (-1)^{i_1+i_2+i_3}\langle \alpha_{i_1}\cup \alpha_{i_2}\cup \alpha_{i_3},[Y]\rangle\cdot \alpha_1 \wedge\dots  \wedge\hat{\alpha}_{i_1} \wedge\dots \wedge\hat{\alpha}_{i_2} \wedge\dots \wedge\hat{\alpha}_{i_3}\wedge\dots\wedge\alpha_k.
\end{equation*}

\begin{remark}
    In \cite{HMb-Y}, the authors give a combinatorial chain-level model for $\overline{CM}_*(Y, \spin; \Gamma_\eta)$ over the integers, but it remains open whether or not the higher differentials in the associated spectral sequence vanish when taken with integer coefficients. We will not need so refined of an analysis in this paper.
\end{remark}

\subsection{The formula}
Whereas \cite[Section 35]{KM} determined the homology groups $\HMb$, we will now identify the cobordism maps (up to filtration).

In the present situation, we have a correspondence $$(\mathbb T_{Y_-}, f_-) \xleftarrow{i_-^*} \mathbb T_W \xrightarrow{i_+^*} (\mathbb T_{Y_+}, f_+),$$ with $f_\pm$ Morse functions on the tori $\mathbb T_{Y_\pm}$, and the maps $i_\pm^*$ affine. So long as the maps $i_\pm^*$ are generic with respect to the Morse functions $f_\pm$ --- which can achieved by a translation --- there is a well-defined induced map on Morse complexes, given by counting intersection points of $\mathbb T_W$ with $U_a \times S_b$ in $\mathbb T_{Y_-} \times \mathbb T_{Y_+}$ \cite[Section 2.8]{KM}. This map will be denoted $m_W$. 

In the case that $Y_-$ and $Y_+$ are equipped with real 1-cycles $\eta_\pm \in C_1(Y_\pm; \Bbb R)$, a real 2-chain $\nu \in C_2(W; \Bbb R)$ whose boundary is $\partial \nu = i_+ \eta_+ - i_- \eta_-$ provides an isomorphism of local systems $$\Gamma_\nu: i_-^* \Gamma_{\eta_-} \cong i_+^* \Gamma_{\eta_+}.$$ In this situation, we have an induced map on Morse complexes $$m_{W,\nu}: C_*(Y_-, f_-; \Gamma_{\eta_-}) \to C_*(Y_+, f_+; \Gamma_{\eta_+}),$$ also described in \cite[Section 2.8]{KM}, and defined in a similar fashion. 

\begin{theorem}\label{thm:constant-term}
Suppose $(W,\spin_W): (Y_-, \spin_-) \to (Y_+, \spin_+)$ is a cobordism with $b^+(W) = 0$ and $\spin_\pm$ torsion. With respect to the isomorphisms of Lemma \ref{lemma:Morse-iso}, the map $\bar{m}_*$ inducing $\HMb_*(W, \spin_W; \Gamma_\nu)$ is filtered, and takes the form $$\bar{m}_* = m_{W, \nu} U^d + m_2 U^{d-1} + m_4 U^{d-2} + \cdots$$ for an appropriate integer $d$ and appropriate maps $m_{2i}$ for $i > 0$.
\end{theorem}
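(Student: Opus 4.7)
The plan is to extend the coupled Morse-theoretic description of $\bar C_*(Y, \spin; \Gamma_\eta)$ from Lemma \ref{lemma:Morse-iso} (i.e., from \cite[Chapter 33]{KM}) to the cobordism setting. The key geometric input is that when $b^+(W) = 0$, the reducible part of the Seiberg--Witten moduli space on $W$ is governed by the finite-dimensional torus $\mathbb T_W$ together with a parameterized family of Dirac operators. First I would choose a metric and perturbation on $W$ extending the Morse-theoretic data on the ends so that all reducible trajectories are cut out transversely. Via Hodge theory for manifolds with cylindrical ends, the space of flat spin$^c$ connections on $W$ modulo gauge is identified with $\mathbb T_W$, and its restriction maps to the ends are exactly $i_\pm^*$.

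Because $c_1(\spin_W)$ restricts to torsion classes on both ends, the spectral flow of the family Dirac operator around any loop in $\mathbb T_W$ vanishes, so one may coherently label its eigenvalues by integers in a way refining the labelings on $Y_\pm$. Reducible trajectories on $W$ with asymptotics $(q_-, \lambda_{i_-}(q_-))$ and $(q_+, \lambda_{i_+}(q_+))$ then live in parameterized moduli spaces of expected dimension controlled by $(i_+ - i_-) - d$, where $d$ is a fixed integer determined by $(W, \spin_W)$ via the Atiyah--Patodi--Singer index theorem (compatibly with $d(\spin_W) = 0$ from \eqref{dspinW}). Under the identification $(q, \lambda_i(q)) \leftrightarrow qU^{-i}$ of Lemma \ref{lemma:Morse-iso}, this dimension bound translates directly into the assertion that $\bar m_*$ is filtered, shifting the $U$-filtration by at most $2d$.

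The leading ($U^d$) term is then identified by isolating the top stratum of the blown-up reducible moduli space: the rigid configurations in which the Dirac eigenvector data contributes no free parameters. Such configurations consist purely of transverse, isolated intersections of the image of $\mathbb T_W \hookrightarrow \mathbb T_{Y_-} \times \mathbb T_{Y_+}$ under $(i_-^*, i_+^*)$ with the product $U_{q_-} \times S_{q_+}$ of Morse unstable and stable manifolds. Weighted by the local-coefficient morphism $\Gamma_\nu$ determined by the 2-chain $\nu$, this count is exactly the correspondence map $m_{W, \nu}$ of \cite[Section 2.8]{KM}. The higher-order corrections $m_{2i} U^{d-i}$ then arise from positive-dimensional strata in which $i$ eigenspinor parameters must be integrated out against the family Dirac data.

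The main technical obstacle will be verifying the filtration property rigorously: this requires controlling the boundary strata of higher-dimensional reducible moduli spaces on $W$, checking that they decompose into broken trajectories compatibly with the $U$-filtration, and arranging the global perturbation data so that all relevant transversality and coherence hold simultaneously on the ends and the interior. A secondary but essentially routine point is the index-theoretic identification of the precise value of $d$ in terms of topological invariants of $(W, \spin_W)$.
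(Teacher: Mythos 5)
Your proposal is correct and follows essentially the same route as the paper: the $\HMb$ cobordism map counts only reducibles, the curvature equation is the (perturbed) ASD equation whose extended-$L^2$ moduli space is $\mathbb T_W$ with restriction maps $i_\pm^*$ by APS Hodge theory, the torsion condition gives coherent eigenvalue labelings so that index counts force the filtration, and the leading term is the transverse count of $\mathbb T_W \cap (U_{q_-} \times S_{q_+})$ weighted by $\Gamma_\nu$, i.e.\ $m_{W,\nu}$. The only difference is one of detail: the paper carries out the key identification of the perturbed ASD moduli space with $\mathbb T_W \cap (U_{q_-}\times S_{q_+})$ concretely (Coulomb gauge, first-order ODE on the ends, spectral subspace conditions, perturbation affecting only the harmonic part), which is the technical step you defer.
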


That is, if we consider the filtration $\mathcal{F}_k\bar{C}_*$ in (\ref{filtration}),
the map $\overline{m}_*(W, \spin_W; \Gamma_\nu)$ is a filtered map with associated graded map equal to the induced map on the associated Morse complex. As a direct consequence, we obtain the following computation, which might be of independent interest.
\begin{cor}
Under the identification with cup homology in (\ref{bariscup}), the associated graded map of $\HMb_*(W, \spin_W;\mathbb{Q})$ is the map in homology induced by the map $(m_W)_*$ on the cup chain complex.   
\end{cor}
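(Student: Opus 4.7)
The plan is to deduce the corollary from Theorem \ref{thm:constant-term} by running the spectral sequence associated with the $U$-adic filtration $\mathcal{F}_k \bar{C}_*$ of (\ref{filtration}). Since Theorem \ref{thm:constant-term} shows $\bar{m}_*$ preserves this filtration, it induces a morphism of spectral sequences converging to the associated graded of $\HMb_*(W, \spin_W; \Gamma_\nu)$. The task is then to identify this morphism on the page where it becomes the map on cup chains.

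On the initial page the complex is $C_*(\mathbb{T}_Y, f; \Gamma_\eta)[U, U^{-1}]$ with differential $\partial_f$, and Theorem \ref{thm:constant-term} identifies the induced map with $m_{W,\nu}$ tensored with powers of $U$. Taking homology yields $H_*(\mathbb{T}_Y; \Gamma_\eta)[U, U^{-1}]$, and by the very construction of $m_{W,\nu}$ in \cite[Section 2.8]{KM} the induced map agrees with the map on singular homology associated to the smooth correspondence $\mathbb{T}_{Y_-} \xleftarrow{i_-^*} \mathbb{T}_W \xrightarrow{i_+^*} \mathbb{T}_{Y_+}$ together with its twisting data $\Gamma_\nu$. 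Under the canonical isomorphism $H_*(\mathbb{T}_Y; \Gamma_\eta) \cong \Lambda^* H^1(Y; \mathbb{R})$ this is exactly the map $(m_W)_*$ on the exterior algebra appearing in the statement.

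By (\ref{d3}) and the results of \cite[Section 35]{KM}, the subsequent differential $d^3$ coincides with contraction against the triple cup product $\cup_Y^3$, so this page of the spectral sequence is naturally isomorphic to the cup chain complex $CC_*(Y)[U, U^{-1}]$; by naturality, the induced map on it is still $(m_W)_*$ acting on cup chains. Working over $\mathbb{Q}$, Kronheimer--Mrowka prove that all further differentials vanish on both spectral sequences, so $E^\infty$ computes cup homology and the induced map on $\mathrm{gr}\, \HMb_*(W, \spin_W; \mathbb{Q})$ is exactly the map on cup homology induced by $(m_W)_*$.

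The main technical obstacle is the identification on the second page: one must check that the Morse-theoretic intersection map $m_{W,\nu}$ genuinely realizes the topological correspondence map on $H_*(\mathbb{T}_Y; \Gamma_\eta)$ with twisted coefficients. This is essentially built into the construction of \cite[Section 2.8]{KM}, with the local-system morphism $\Gamma_\nu$ providing the identification $(i_-^*)^\ast \Gamma_{\eta_-} \cong (i_+^*)^\ast \Gamma_{\eta_+}$ needed to even define the map, but compatibility of these data with the chosen Morse models and filtrations deserves explicit verification. Once this is in hand, the rest is a routine spectral sequence manipulation.
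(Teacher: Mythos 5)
Your proposal follows the paper's own route: the corollary is deduced directly from Theorem \ref{thm:constant-term} by feeding the filtered map into the spectral sequence of the $U$-filtration (\ref{filtration}), identifying the relevant page with the cup chain complex via \cite[Section 35]{KM}, and using the vanishing of the higher differentials over $\mathbb{Q}$, with the compatibility of $(m_W)_*$ with the $d^3$ differential being exactly the point recorded in Remark \ref{samecup}. One small correction: the corollary concerns constant $\mathbb{Q}$-coefficients, so the local systems should be dropped at this stage --- for $[\eta]\neq 0$ the twisted homology $H_*(\mathbb{T}_Y;\Gamma_\eta)$ vanishes rather than being $\Lambda^* H^1(Y;\mathbb{R})$, so your claimed identification on that page only holds in the untwisted case, which is all the statement requires.
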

\begin{remark}\label{samecup}
Notice that the latter is actually a chain map on the cup chain complex; this follows because $i_+^*(\cup_{Y_+})$ is the same as $i_-^*(\cup_{Y_-})$ as maps from $\Lambda^3H^1(W)$ to $\mathbb{Z}$. More generally, for a manifold $W$ with possibly disconnected boundary $\partial W$, let
\begin{equation*}
r:H^1(W)\rightarrow H^1(\partial W)   
\end{equation*}
denote the restriction map; we denote the induced map on the exterior algebras by $r$ as well. Then the pull back of the triple cup product of $\partial W$ under $r$ vanishes. This can be seen geometrically as follows: the map $r$ corresponds via Poincar\'e duality to the boundary map
\begin{equation*}
\partial: H_3(W,\partial W)\rightarrow H_2(\partial W).
\end{equation*}
Given three elements $\alpha_i$  in $H^1(\partial W)$, represent their duals by surfaces $S_i$ in $\partial W$.
If the $\alpha_i$ are in the image of $r$, then $S_i$ is the boundary of three-manifolds $T_i$ in $W$. After moving everything to a transverse position, $S_1\cap S_2\cap S_3$ (which is dual to $\alpha_1\wedge \alpha_2\wedge\alpha_3$) is a $0$-manifold which is the boundary of $T_1\cap T_2\cap T_3$, hence is zero in homology.
\end{remark}

\begin{remark}
A version of Theorem \ref{thm:constant-term} with more general local systems proves a weaker version of \cite[Conjecture 7.3]{Kru}, and gives a gauge-theoretic proof of \cite[Theorem 7.4]{Kru}.
\end{remark}

These results deserve some further comment. First of all, the isomorphism in Lemma \ref{lemma:Morse-iso} is only defined up to multiplication by powers of $U$. This can be pinned down by using the absolute $\Bbb Q$-grading of \cite[Section 28.3]{KM}, according to which the degree of $\HMb_*(W;\spin_W)$ is given by
\begin{equation}\label{grading}
\frac{1}{4}c_1^2(\spin_W)-\iota(W)-\frac{1}{4}\sigma(W) = d(\spin_W) - \frac 12 \big(b_1(Y_-) - b_1(Y_+)\big)
\end{equation}
where $d(\spin_W)$ is the expression (\ref{dspinW}) and
\begin{equation*}
\iota(W)=\frac{1}{2}\big(\chi(W)+\sigma(W)+b_1(Y_-)-b_1(Y_+)\big).
\end{equation*}
As $U$ has grading $-2$, and the degree of $m_{W, \nu}$ is $b_1(W) - b_1(Y_-)$, we may thus compute the integer $d$ from Theorem \ref{thm:constant-term} as $$d = -\frac 12 d(\spin_W) - \frac 14 \left(2b_1(W) - b_1(Y_-) - b_1(Y_+)\right).$$ In the case of most interest to us in the current paper, we have $Y_- = Y_+$ and $b_1(W) = b_1(Y_-)$ so $d = -\frac 12 d(\spin_W).$

Secondly, even though it will not be needed for this paper, for some applications one would like to compute the map itself rather than the associated graded map. One might hope to compute the higher terms up to (filtered) homotopy, and expect them to be related to the kernel of the Dirac operators on $W$ coupled to the spectral decomposition associated to $D_{Y_\pm}$. An important technical complication when trying to write down an explicit formula in homology is that while there is a canonical identification of the associated graded module of $\HMb_*$ \eqref{bariscup}, there is no known canonical lift to $\HMb_*$ itself.

\subsection{Proof of Theorem \ref{thm:constant-term}} 
Recall from \cite[Chapter $25$]{KM} that the map induced by a cobordism $(W,\spin_W)$ on $\HMb_*$ (possibly with local coefficient systems) is obtained by counting solely reducible solutions to the Seiberg--Witten equations in the blow-up on the cobordism $W^*$ with cylindrical ends attached. The unperturbed equations for a reducible configuration $(A,0,\varphi)$ in the blow-up are
\begin{equation*}
    \begin{cases}
    D_A^+\varphi=0\\
    F_{A^t}^+=0,
    \end{cases}
\end{equation*}
which one needs to study with the right asymptotic growth conditions for $\varphi$ on the cylindrical ends; different asymptotic conditions change the index of the Dirac operator $D_A^+$. We are counting solutions to this equation of index zero. We may break this count up into a sum$$\bar{m}_*=m_0 + m_2 + \cdots$$ as follows. The map $m_{2i}$, by definition, counts solutions $(A, 0, \phi)$ to a perturbed version of these equations for which $A$ lies in a $2i$-dimensional moduli space (or equivalently, is asymptotic to flat connections whose Morse index has difference $2i-(b_1(W)-b_1(Y_+))$, see the formula on p. 47 of \cite[Section 2.8]{KM}) and the asymptotic conditions are chosen so that $\text{ind}_{\mathbb C}(D_A^+) = 1-i$; after projectivizing one obtains a finite number of points. The highest-filtration term in $m_0$ then corresponds to the count of index-zero solutions to a perturbed version of the second equation and asymptotic conditions for which $\text{ind}_{\mathbb{C}}(D_A^+) = 1$. 
\par
The second equation is simply the ASD equation, and its solutions up to gauge with $L^2$-curvature form the $b_1(W)$-dimensional torus
\begin{equation*}
\mathbb{T}_W=H^1(W;i\mathbb{R})/H^1(W;2\pi i\mathbb{Z}).
\end{equation*}
Here, following \cite[Proposition 4.9]{APS1}, we can interpret the numerator as a space $L^2_{\text{ext}}$ of extended $L^2$-harmonic $1$-forms $\alpha$, i.e. the harmonic $1$-forms that exponentially decay on each end to a time-independent harmonic $1$-form $\alpha_{\pm}$. Sending each such form to the limiting value is the Hodge theoretic incarnation of the maps
\begin{equation*}
i^*_{\pm}:H^1(W)\rightarrow H^1(Y_\pm).
\end{equation*}
In the isomorphism of Lemma \ref{lemma:Morse-iso}, we use a perturbation of the form $f_\pm \circ p_\pm$ where $f_\pm$ are our chosen Morse functions and  
\begin{equation*}
p_\pm:\mathcal{B}(Y_\pm,\mathfrak{s})\rightarrow \mathbb{T}_{Y_\pm}
\end{equation*}
is the retraction obtained by mapping $(B,\Psi)$ via the $L^2$-orthogonal projection of the $1$-form $B-B_0$ to its harmonic part $(B-B_0)^{\mathrm{harm}}$ given by the Hodge decomposition\footnote{We have fixed a reference flat spin$^c$ connection $B_0$ here.}. We use these to perturb the Seiberg--Witten equations as in \cite[Chapter $24$]{KM}, using on $[0,\infty)\times Y_+$ a smooth cutoff function $$\beta: [0, \infty)\to [0,1]$$ which is nondecreasing, equal to zero near zero and equal to $1$ for $t \ge 1$, and similarly on the other end.
\par
We then consider solutions $(A,\varphi)$ to a perturbed Seiberg--Witten equation for which the gauge equivalence classes $A\lvert_{(-\infty,0]\times Y_-}$ and $A\lvert_{[0,+\infty)\times Y_+}$ converge to limits $q_\pm$, reducible solutions on $Y_{\pm}$ which are critical points of $f_{\pm}$.
We demand $\varphi$ satisfies some asymptotic growth conditions at $\pm\infty$ in terms of the eigenvalues and eigenspinors of the limit Dirac operators $D_{q_{\pm}}$ (which can also be rephrased in terms of convergence in the $\tau$-model).

Again let us focus on the perturbed ASD equation. The torus $\mathbb T_W$ is equipped with a map to $\mathbb T_{Y_-} \times \mathbb T_{Y_+}$, and inside the latter we can consider the unstable and stable submanifolds $U_{q_-},S_{q_+}$ of critical points $q_\pm$ of $f_\pm$. 

\begin{lemma}
The moduli space of $L^2_{\mathrm{ext}}$ connections satisfying the perturbed ASD equations above on $W$ asymptotic to $q_\pm$ is oriented diffeomorphic to $\mathbb T_W \cap (U_{q_-} \times S_{q_+})$ by a diffeomorphism preserving the local sytem $\Gamma_\nu$. 
\end{lemma}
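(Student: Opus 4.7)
The plan is to break the moduli space description into three steps: (i) identifying unperturbed $L^2_{\text{ext}}$ ASD solutions on $W^*$ with $\mathbb{T}_W$ via Hodge theory; (ii) understanding the effect of the end-supported perturbation as a gradient-flow condition on the harmonic component of the connection; and (iii) translating the asymptotic conditions to membership in $U_{q_-} \times S_{q_+}$.

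First I would recall that, by the APS/cylindrical-end Hodge theory already invoked in the body of the section, the unperturbed $\bar m$-relevant equation $F^+_{A^t} = 0$ modulo gauge on $W^*$ has moduli space naturally identified with $\mathbb{T}_W$: a representative is obtained by picking a base connection $A_0$ extending its limits flatly on the ends, and an arbitrary $L^2_{\text{ext}}$ solution is $A_0 + i\alpha$ for $\alpha$ an extended harmonic real $1$-form on $W^*$; modulo $H^1(W;2\pi i \mathbb{Z})$ and gauge this gives the claimed torus. The restriction-to-ends maps realize the Hodge-theoretic projection as $i_\pm^*: \mathbb{T}_W \to \mathbb{T}_{Y_\pm}$.

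Next I would analyze the perturbation. Put the cylindrical ends in temporal gauge; then the perturbed curvature equation on $[1,\infty)\times Y_+$ becomes a downward gradient flow equation for a functional that, projected to the harmonic component via $p_+$, is exactly $f_+$. (The $p_+$ map picks out the harmonic part of the connection $1$-form, and the perturbation was designed so that its effect on this finite-dimensional quotient is the gradient of $f_+$ on $\mathbb{T}_{Y_+}$; the complementary non-harmonic directions still flow trivially as in the unperturbed case, because the projection $p_\pm$ kills them.) On the interpolating region $[0,1]\times Y_+$ the cutoff $\beta$ turns the perturbation on gradually, producing a time-dependent but smooth non-autonomous gradient flow; this defines a diffeomorphism of $\mathbb{T}_{Y_+}$ that carries the time-$0$ harmonic value to the time-$1$ harmonic value. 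Composed with the honest gradient flow of $f_+$ for $t \ge 1$, the asymptotic condition $A|_{[0,\infty)\times Y_+} \to q_+$ is thus equivalent to requiring $i_+^* A \in S_{q_+}$; the time-dependent cutoff contributes only an ambient diffeomorphism of $\mathbb{T}_{Y_+}$ preserving $S_{q_+}$ (and, in fact, once the flow-by-$\beta$ reparametrization is absorbed into the identification, gives exactly $i_+^* A \in S_{q_+}$). Running the same argument on the incoming end, with the reverse time convention, gives $i_-^* A \in U_{q_-}$.

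Combining these, the map sending an element of the perturbed moduli space to the gauge class of its restriction to the compact core, viewed in $\mathbb{T}_W$, is a bijection onto the set of $x \in \mathbb{T}_W$ with $i_-^* x \in U_{q_-}$ and $i_+^* x \in S_{q_+}$, i.e.\ onto $\mathbb{T}_W \cap (U_{q_-} \times S_{q_+})$ inside $\mathbb{T}_{Y_-}\times\mathbb{T}_{Y_+}$. That this is a diffeomorphism (not just a bijection) follows from the standard unique-continuation/implicit-function arguments for ASD on cylindrical ends: the linearization of the end-equation along any such solution is surjective, with kernel precisely the infinitesimal deformations of the harmonic piece, and these are exactly the tangent directions in $\mathbb{T}_W$ preserving the stable/unstable membership conditions.

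The main technical step I would expect to be the most delicate is justifying that the non-harmonic part of the connection decouples cleanly from the harmonic part under the perturbation $f_\pm \circ p_\pm$, so that no extra moduli beyond $\mathbb{T}_W$ appear; this uses essentially that $p_\pm$ is the orthogonal projection to the harmonic component, that $f_\pm$ depends only on this component, and that the non-harmonic part of the flow on the end is a linear parabolic equation with exponentially decaying solutions. The orientation statement would be handled at the end by comparing the canonical orientation of the Seiberg--Witten moduli space (from the determinant line bundle of the perturbed linearization) to the orientation on $\mathbb{T}_W \cap (U_{q_-}\times S_{q_+})$ coming from chosen orientations on $\mathbb{T}_W$, $U_{q_-}$, and $S_{q_+}$; this is a routine, if bookkeeping-heavy, comparison of determinant lines, and is consistent with the sign conventions used in \cite[Section 2.8]{KM} to define $m_{W,\nu}$.
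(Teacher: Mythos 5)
Your proposal is correct and follows essentially the same route as the paper's proof: on the ends the perturbed equation is a first-order flow whose perturbation term is purely harmonic, so the non-harmonic part obeys the unperturbed decay/spectral condition identifying solutions with points of $\mathbb T_W$, while the harmonic part must follow a (reparametrized) gradient trajectory, which translates the asymptotics into membership in $U_{q_-} \times S_{q_+}$. The only cosmetic differences are your use of temporal rather than Coulomb gauge and of the extended-harmonic-form description of $\mathbb T_W$ on $W^*$ in place of the paper's description via APS spectral boundary conditions on the compact core.
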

\begin{proof}
Identifying $1$-forms on the end $[0,\infty) \times Y_+$ with time-dependent elements $\omega(t)\in\Omega^0(Y_+) \oplus \Omega^1(Y_+)$, the unperturbed ASD equation together with the Coulomb gauge fixing condition
\begin{equation*}
(-d^*)\oplus d^+:\Omega^1\rightarrow \Omega^0\oplus \Omega^+
\end{equation*}
on the cylinder can be written as
$$\frac{d}{dt}\omega(t) + L\omega(t)=0$$
where $$L = \begin{pmatrix} 0 & -d^* \\ -d & *d \end{pmatrix}.$$ The perturbations we consider are defined via the $L^2$-orthogonal projection to harmonic $1$-forms, and the corresponding perturbed ASD equations together with Coulomb gauge fixing then take the form $$\frac{d}{dt}\omega(t) + L\omega(t) + \beta(t) (\mathrm{grad}\tilde f_+)(\omega^{\text{harm}}(t)) = 0,$$where $\tilde f_+: H^1(Y_+; i\mathbb R) \to \mathbb R$ is the periodic lift of the chosen Morse function on $\mathbb T_{Y_\pm},$ which is then applied to the harmonic part of the time-dependent form in $\Omega^1(Y_+)$.
\par
The torus $\mathbb T_W$ can be understood as the space of unperturbed ASD connections in $L^2_k$ modulo gauge on the compact manifold $W$ such that the coclosed parts of the boundary values $$\Pi: L^2_k(\Omega^1(W)) \to \ker(d^*_{Y_-}) \oplus \ker(d^*_{Y_+}) \subset L^2_{k-1/2}(\Omega^1(Y_-)) \oplus L^2_{k-1/2}(\Omega^1(Y_+))$$ lie in the spectral subspace $$H_-^{\le 0} \oplus H_+^{\ge 0} \subset \ker(d^*_{Y_-}) \oplus \ker(d^*_{Y_+})$$ spanned by nonnegative (respectively nonpositive) eigenspaces of the signature operator $*d_{Y_\pm}$. Because the unperturbed ASD connections solves a first order ODE on the ends, each boundary value lying in this subspace extends to a unique solution to the ASD equation on the cylinder (up to gauge); the spectral condition guarantees that the solution is in $L^2_{\text{ext}}$ up to gauge. Notice that $H_+^{\ge 0}$ splits into the sum $H^1(Y_+) \oplus H_+^{>0}$ of the space of harmonic forms and the span of strictly positive eigenspaces, and similarly for $H_-^{\geq0}$.
\par
Because a perturbed ASD connection on $[0, \infty) \times Y_+$ again solves a first order ODE, it is determined by its boundary value in $\ker(d^*_{Y_+})$ (up to gauge). Further, because the operator is only perturbed on the harmonic part, it is still true that for the ASD connection to be $L^2_{\text{ext}}$ this boundary value must lie in the spectral subspace $H^{\ge 0}_+$. A similar argument applies for $Y_-$. Because the equation is unchanged on $W$, the perturbed ASD solutions are identified with a \textit{subset} of $\mathbb T_W$, which was earlier identified with ASD solutions on $W$ whose boundary values lie in $H^{\le 0}_- \times H^{\ge 0}_+$.

Finally, examining the harmonic part, we see that a boundary value extends to a solution to the perturbed equation with the correct asymptotics if and only if the harmonic part $[\omega^{\text{harm}}]|_{\{0\} \times Y_\pm} \in \mathbb T_{Y_\pm}$ lies in the unstable and stable manifold of $q_-$ and $q_+$, respectively. 

The claim about orientations follows from compatibility between orientations and gluing, as in \cite[Section 20.3]{KM}. The claim about the local system follows immediately from the definition: both maps are weighted by a factor of $e^{\int_\nu F^+_{A^t}}$, the only difference being that in the Morse theory case we integrate over the intersection of $\nu$ with the compact part $W$ of the cobordism while in the gauge theory case we integrate over the noncompact surface $\nu^* \subset W^*$. However, because $A^t$ is flat on the ends, the additional contribution is zero.
\end{proof}

This identifies the `top-degree term' in $\bar m_*$, which corresponds to the case where the relevant Dirac operator has index $1$: the kernel of this operator is a one dimensional complex vector space, so that after projectivizing it contributes a single positively-oriented point, up to gauge. Hence the relevant moduli spaces of solutions simply count solutions to the perturbed ASD equations.

To conclude the proof, following \cite[Section $2.8$]{KM} (and dropping local systems for simplicity), the map induced on the Morse complexes
\begin{equation}\label{morsemap}
m_{W}: C_*(\mathbb T_{Y_-})\rightarrow C_*(\mathbb T_{Y_+})    
\end{equation}
by the correspondence $\mathbb T_W \to \mathbb T_{Y_-} \times \mathbb T_{Y_+}$ is obtained (under suitable transversality conditions) by considering triples $(\gamma_-,\gamma_+,w)$ where
\begin{itemize}
\item $\gamma_-:(-\infty,0]\rightarrow \mathbb T_{Y_-}$ is a finite energy half trajectory with $\gamma_-(-\infty) = q_-$
\item $\gamma_+:[0,\infty)\rightarrow \mathbb T_{Y_+}$ is a finite energy half trajectory with $\gamma_+(+\infty) = q_+$ 
\item $w \in \mathbb T_W$ and $i^*_\pm(w)=(\gamma_-(0),\gamma_+(0))$.
\end{itemize}
Because the space of such half-trajectories can be identified with the unstable and stable manifolds $U_{q_-}$ and $S_{q_+}$, we see that $m_W$ is defined by exactly the same counts as the top-filtration term of $\bar m_*$.

\section{$RSF$-spaces} \label{RSF}
We want to take advantage of our description of $\bar{m}_*$ in the preceding section, where we determined the \textit{associated graded} map with respect to an appropriate filtration. To use this to get information about the map $\hat{m}$, we need to make further assumptions about the torsion spin$^c$ $3$-manifold $(Y,\spin)$ under consideration.
\subsection{Definitions.}
To proceed, suppose we have a metric and perturbation on $Y$ (for the spin$^c$ structure $\spin$) such that there are no irreducible solutions, transversality is achieved in the sense of \cite{KM}, and the bar complex is computed by coupled Morse theory as in the previous section. The relatively graded $\Bbb Q$-vector space $\hat{C}_*(Y,\spin)$ can then be identified as a subspace 
\begin{equation}\label{CMfrom-submod}\hat{C}_*(Y,\spin) = \bigoplus_{q \in \text{Crit}(f)} U^{d(q)} \Bbb Q[U] \subset \bigoplus_{q \in \text{Crit}(f)} \Bbb Q[U, U^{-1}] = \bar{C}_*(Y,\spin),
\end{equation}
where the shift $d(q) \in \Bbb Z$ and the identification of $\bar{C}_*(Y,\spin)$ (as a $\Bbb Q[U]$-module) are discussed after Lemma \ref{lemma:Morse-iso}. It is often convenient to normalize $d$ so that $d(q) \le 0$, with largest value equal to zero, and we will do so in what follows.
\par
There are four closely related subtleties. The first is that $\hat C$ is generally \textit{not} a subcomplex: the differential on $\hat C$ is $$\hat{\partial} = -\overline \partial^u_u -  \overline \partial^s_u \partial^u_s,$$ where the term $\partial^u_s$ is defined in terms of \emph{irreducible} Seiberg--Witten solutions on $\mathbb R \times Y$. Secondly, even though $\hat{C}(Y,\spin)$ is naturally a filtered module, with filtration given by (\ref{filtration}), the differential $\hat \partial$ does not necessarily preserve it. Thirdly, the comparison (anti-)chain map 
\begin{equation}\label{connecting-map} p = 1 + \partial_s^u : \hat{C}_*(Y,\spin) \to \bar{C}_*(Y,\spin) 
\end{equation}
also involves this term, and is not necessarily filtration-preserving. Finally, $\hat C$ is generally \textit{not} a $U$-submodule, because $p$ only commutes with the action of $U$ up to homotopy.\footnote{Notice that the claim in the proof of \cite[Proposition 25.1.1]{KM} that the equalities hold at the chain level is incorrect. In our situation with no irreducible critical points, up to overall signs the
chain homotopy between $\bar{m}(U,-)\circ p$ and $p\circ\hat{m}(U,-)$ is 
simply given by $m^u_s(U,-)$; this readily follows from identity $(\mathrm{iv})$ in \cite[Lemma 25.3.6]{KM}.}
\\
\par
Because of this, we make the following definition.
\begin{definition}\label{maindef}
We say a torsion spin$^c$ three-manifold $(Y, \spin_Y)$, where $b_1(Y)\geq 1$, is an \textbf{$RSF$-space} if $Y$ admits a regular choice of metric and perturbations such that:
\begin{enumerate}[label=(\alph*)]
    \item there are only \textit{reducible} solutions to the Seiberg--Witten equations;
    \item the map $\partial_s^u$ is \textit{strictly filtered} with respect to the $U$-filtration on $\hat{C}_*(Y,\spin)$ and $\bar{C}_*(Y,\spin)$. By {strictly filtered} we mean that $$\partial_s^u(\mathcal F_k \hat{C}) \subset \mathcal F_{k-2} \bar{C}$$
for the filtration (\ref{filtration}).
\item the complex $\bar{C}_*(Y,\spin)$ coincides with the coupled Morse complex associated to a Morse function $f:\mathbb{T}_Y\rightarrow \mathbb{R}$ and the corresponding family of (perturbed) Dirac operators as in Section \ref{negative}.
\end{enumerate}
\end{definition}

In this case, $\hat{C}_*(Y,\spin)$ is a filtered complex for which $p$ is a filtered map with associated graded map the natural inclusion; the associated graded complexes are
\begin{align*}
\text{gr}_{2i+1} \hat C &= 0,\\
\text{gr}_{2i} \hat C &= \text{span} \{q \in \text{Crit}(f) \mid d(q) \le i\} \subset C_*(\mathbb T, f; \Gamma_\eta).
\end{align*}
Moving to the cobordism maps, there are two distinct cases to analyze:
\begin{itemize}
\item if $b^+(W) > 0$, after choosing a perturbation by a self-dual form as in \cite[Section $27.2$]{KM}, there are no reducible solutions on the cobordism so that $\bar{m}_*(W)$ vanishes and therefore $\hat{m}_*(W)$ vanishes as well because $p$ is injective.
\item if $b^+(W) =0$, working in the setup of Section \ref{negative} it follows from the equation (\ref{connecting-map}) that the associated graded map of $\bar{m}_*(W)$ determines the associated graded map of $\hat{m}_*(W)$, and we determined the former in Theorem \ref{thm:constant-term}.
\end{itemize}
We record these observations as a lemma in the specific case of interest, where the two ends coincide.
\begin{lemma}\label{lemma:CM-from-map}
    If $(Y, \spin)$ is an $RSF$-space, the complex $\hat{C}_*(Y, \spin_Y; \Gamma_\eta)$ is a filtered complex. If $(W, \spin_W, \nu)$ is a cobordism from $(Y, \spin, \eta)$ to itself, the map $\hat{m}_*(W)$ satisfies $$\textup{gr } \hat{m}_*(W, \spin_W, \nu) = U^d m_{W, \nu}$$ for $b^+(W) = 0$, and is zero on the nose otherwise.
\end{lemma}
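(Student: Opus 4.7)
The plan is to leverage Theorem \ref{thm:constant-term} via the chain-level comparison map $p = 1 + \partial^u_s$, using the $RSF$ hypothesis to ensure that filtrations pass cleanly from $\bar C$ to $\hat C$ and are preserved under cobordism maps.

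To see that $\hat C$ is a filtered complex, I would decompose $\hat\partial = -\overline\partial^u_u - \overline\partial^s_u \partial^u_s$ and note that both terms respect the filtration: $\overline\partial^u_u$ as a piece of the bar differential is filtered by Lemma \ref{lemma:Morse-iso}, while $\overline\partial^s_u \partial^u_s$ is strictly filtered since $\partial^u_s$ is so by condition (b) of Definition \ref{maindef}. The same argument shows $p = 1 + \partial^u_s$ is filtered, and because $\partial^u_s$ is \emph{strictly} filtered, $\text{gr}(p)$ is the natural inclusion $\text{gr}(\hat C) \hookrightarrow \text{gr}(\bar C)$ from (\ref{CMfrom-submod}); in particular it is injective. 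An analogous decomposition $\hat m(W) = \overline m^u_u(W) + (\text{corrections factoring through } \partial^u_s)$, combined with the filteredness of $\overline m(W)$ from Theorem \ref{thm:constant-term}, shows that $\hat m(W)$ is itself a filtered map, so passing to associated graded is meaningful.

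The key input is then chain-level naturality: $p$ is part of the short exact sequence underlying Kronheimer--Mrowka's exact triangle, and cobordism maps respect this sequence, giving $p \circ \hat m(W) = \overline m(W) \circ p$. Passing to associated graded and using injectivity of $\text{gr}(p)$ gives $\text{gr}(\hat m(W))$ as the unique map whose composition with $\text{gr}(p)$ equals $\text{gr}(\overline m(W)) \circ \text{gr}(p)$. For $b^+(W) = 0$, Theorem \ref{thm:constant-term} gives $\text{gr}(\overline m(W)) = U^d m_{W,\nu}$; a grading computation (using (\ref{grading}) and the fact that $m_{W,\nu}$ shifts Morse degree by $b_1(W) - b_1(Y)$) shows that $U^d m_{W,\nu}$ preserves the submodule $\text{gr}(\hat C) \subset \text{gr}(\bar C)$, so $\text{gr}(\hat m(W)) = U^d m_{W,\nu}$. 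For $b^+(W) > 0$, one chooses a generic compactly supported self-dual $2$-form perturbation of the Seiberg--Witten equations on $W$ to eliminate all reducible solutions; then $\overline m(W) = 0$ at the chain level and hence $\text{gr}(\hat m(W)) = 0$.

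The main obstacle is establishing the identity $p \circ \hat m(W) = \overline m(W) \circ p$ \emph{strictly} at the chain level (not just up to homotopy). This requires a careful unpacking of the Kronheimer--Mrowka definitions of the cobordism maps and the connecting map $p$, in particular keeping track of how irreducible solutions on $W$ contribute to $\hat m(W)$ even when none exist on $Y$. A secondary technical point is the grading check that $U^d m_{W,\nu}$ preserves $\text{gr}(\hat C)$, which amounts to showing $d + d(q) \geq d(q')$ whenever $m_{W,\nu}$ connects $q$ to $q'$; this follows from comparing the absolute grading of $\overline m(W)$ to the spectral flow contributions $d(q),d(q')$ using (\ref{grading}).
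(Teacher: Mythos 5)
Your overall strategy (deduce $\mathrm{gr}\,\hat m$ from $\mathrm{gr}\,\bar m$ using the comparison map $p=1+\partial^u_s$, the strict filteredness from the $RSF$ condition, and Theorem \ref{thm:constant-term}) is the same as the paper's, and the first part of your argument --- that $\hat\partial=-\overline\partial^u_u-\overline\partial^s_u\partial^u_s$ is filtered and that $p$ is filtered with associated graded the natural inclusion --- is exactly what is done here. The problem is the step you yourself flag as ``the main obstacle'': the strict chain-level identity $p\circ\hat m(W)=\bar m(W)\circ p$. This is not merely an unverified technicality; it is not available in the Kronheimer--Mrowka framework, where the cobordism maps intertwine the maps $i,j,p$ of the exact triangle only up to chain homotopy. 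Concretely, when there are no irreducible critical points on $Y$ the chain-level cobordism map has the block form $\hat m=\bar m^u_u-\overline\partial^s_u\, m^u_s-\bar m^s_u\,\partial^u_s$, where $m^u_s$ counts \emph{irreducible} solutions on $W^*$; computing $p\hat m-\bar m p$ one finds it involves precisely terms like $\overline\partial^s_u m^u_s+\bar m^s_u\partial^u_s$, which are homotopy-type corrections and do not vanish in general. Since your derivation of both the filteredness of $\hat m$ and the identification $\mathrm{gr}\,\hat m=U^d m_{W,\nu}$ (and the vanishing for $b^+(W)>0$) all pass through this identity, the proposal does not establish the lemma as written.

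What a complete argument needs, and what the strict naturality shortcut hides, is control of the extra components of $\hat m$ coming from irreducible solutions on the cobordism. The term $\bar m^s_u\,\partial^u_s$ is handled by the $RSF$ hypothesis (it is strictly filtered because $\partial^u_s$ is, and $\bar m^s_u$ is filtered by Theorem \ref{thm:constant-term}); the genuinely delicate term is $\overline\partial^s_u\, m^u_s$, about which Theorem \ref{thm:constant-term} says nothing, and your ``corrections factoring through $\partial^u_s$'' decomposition silently omits it. Either one argues directly with this component formula, showing the $m^u_s$-term does not contribute to the associated graded, or one upgrades the homotopy-naturality of $p$ to a \emph{filtered} statement --- which again requires filtration control on the irreducible counts on $W^*$. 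Your concluding grading check (that $U^d m_{W,\nu}$ preserves $\mathrm{gr}\,\hat C$) is fine but secondary; the missing ingredient is the treatment of the irreducible cobordism solutions, which is where the content of the lemma actually sits.
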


\subsection{Examples of $RSF$-spaces}\label{exman}
The simplest example of an $RSF$-space is given by a torsion spin$^c$ three-manifold $(Y,\spin)$ admitting a metric of positive scalar curvature, as follows from the discussion in \cite[Chapter $36$]{KM}. These examples are somewhat trivial because the family of Dirac operators parametrized by $\mathbb{T}_Y$ never has kernel; in the setup of Theorem \ref{thm:weak-calculation}, which we prove below, this leads to \textit{vanishing} results. We now discuss some non-trivial examples where we obtain instead \textit{rigidity} results.
\par
All of our non-trivial examples come from the following procedure: 

\begin{enumerate}[label=(\roman*)]
\item Find a metric $g$ so that $(Y, \spin)$ supports no irreducible Seiberg--Witten solutions, so that the critical locus of the Chern--Simons--Dirac functional $\mathcal{L}$ is exactly $\mathbb{T}$. Notice that $\mathcal{L}$ fails to be Morse--Bott exactly along the locus $\mathsf{K}$ where the corresponding Dirac operator has kernel.
\item After a suitable perturbation without introducing irreducible solutions, choose an additional Morse function $f: \mathbb T_Y \to \mathbb R$ so that the critical points of $f$ are Dirac operators with no kernel. Labeling the eigenvalues of the Dirac operator at each critical point $q$ as following Lemma \ref{lemma:Morse-iso} and writing $d(q)$ for the index of the first positive eigenvalue, we demand that for each pair of critical points $x,y$ with $d(x) > d(y)$, we have $f(x) < f(y)$. (This includes as a special case the `$A$-adapted perturbations’ of \cite{spectrally-large}).
\item Even if the unperturbed equations did not admit irreducible solutions, it is challenging to carry out the perturbation process without introducing such solutions. One uses the geometry at hand to argue that the perturbed Seiberg--Witten equations are now regular, but still have no irreducible solutions. After this, one can add additional perturbations as in \cite[Ch. 12]{KM} to achieve transversality (in particular by making the spectrum of the pertubed Dirac operators at reducible critical points simple).
\end{enumerate}

\begin{lemma}
For any metric and perturbation on $(Y, \spin)$ constructed as in the above procedure, the map $\partial_s^u$ is strictly filtered.
\end{lemma}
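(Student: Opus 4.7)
The plan is to unwind what $\partial_s^u$ counts at the chain level and then combine two structural inputs: the choice of $f$ in item (ii), which ranks critical points by the location of the first positive eigenvalue of the associated Dirac operator, and energy positivity for irreducible Seiberg--Witten trajectories on $\mathbb R \times Y$, which together will force the $f$-ranking to be monotone along any matrix coefficient of $\partial_s^u$.

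More concretely, under the identification of Lemma~\ref{lemma:Morse-iso}, I would first rewrite each matrix coefficient of $\partial_s^u$ as a count of rigid irreducible Seiberg--Witten trajectories from a boundary-unstable reducible over a critical point $q_-$, sitting at an eigenvalue $\lambda_{i_-}(q_-)<0$ (so $i_-<d(q_-)$), to a boundary-stable reducible over $q_+$, sitting at $\lambda_{i_+}(q_+)>0$ (so $i_+\ge d(q_+)$). These generators correspond to $q_-U^{-i_-}$ and $q_+U^{-i_+}$ respectively, and unwinding the filtration (\ref{filtration}), the assertion that $\partial_s^u$ is strictly filtered reduces to showing $i_+\ge i_-+1$ whenever the relevant matrix coefficient is nonzero.

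The key analytic step will be to show that the existence of such an irreducible trajectory requires a nonnegative drop in the perturbed Chern--Simons--Dirac functional. Restricted to reducibles this evaluates to $CS+f$, and in the $A$-adapted setup of item (ii) and \cite{spectrally-large} the function $f$ is arranged with spread large enough to dominate the multivalued Chern--Simons contribution on flat connections. Consequently one obtains $f(q_-)\ge f(q_+)$. Combining this with the contrapositive of the hypothesis ``$d(x)>d(y)\Rightarrow f(x)<f(y)$'' will yield $d(q_-)\le d(q_+)$, whence
\[
 i_+\ \ge\ d(q_+)\ \ge\ d(q_-)\ \ge\ i_-+1,
\]
which is the desired strict inequality and so is exactly $\partial_s^u(\mathcal F_k\hat C)\subset \mathcal F_{k-2}\bar C$.

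The main obstacle will be establishing the energy-to-$f$-value comparison $f(q_-)\ge f(q_+)$. Subtlety enters because Chern--Simons is multivalued and distinct relative homotopy classes of trajectories on $\mathbb R\times Y$ accrue different amounts of CS energy; the $A$-adapted construction of \cite{spectrally-large} is designed precisely to handle this, but making the bound rigorous will require care with how the perturbation enters the Seiberg--Witten energy identity, together with the hypothesis in item (iii) that no irreducible critical points appear on $Y$, which constrains the geometry of any irreducible cylinder trajectory.
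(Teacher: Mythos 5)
Your proposal follows essentially the same route as the paper: an irreducible trajectory from a boundary-unstable generator $(q_-,\lambda_{i_-})$ to a boundary-stable generator $(q_+,\lambda_{i_+})$ forces the perturbed Chern--Simons--Dirac functional, hence $f$, to decrease, and the spectral ordering condition on $f$ then gives $d(q_-)\le d(q_+)$, so $i_+\ \ge\ d(q_+)\ \ge\ d(q_-)\ \ge\ i_-+1$, exactly as in the paper. The ``main obstacle'' you anticipate is not actually one: since $\spin$ is torsion the Chern--Simons--Dirac functional is single-valued and vanishes on the torus of flat reducibles, so the perturbed functional equals $\varepsilon f$ on the reducible critical set and the comparison $f(q_-)\ge f(q_+)$ is immediate from monotonicity along the flow, with no need to dominate any Chern--Simons periods.
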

\begin{proof}
The unstable critical points $(x, \lambda_i(x))$ have $i < d(x)$, while the stable critical points $(y,\lambda_j(y))$ have $d(y) \le j$. Suppose $\langle \partial_s^u (x,\lambda_i(x)), (y,\lambda_j(y))\rangle \ne 0$, so there exists an irreducible flowline from the stable critical point $(x,\lambda_i(x))$ and flowing to the unstable critical point $(y,\lambda_j(y))$. 

The perturbed Chern--Simons--Dirac functional coincides with $\varepsilon f$ on the reducible critical set, so we must have $f(x) > f(y)$ and therefore $d(x) \le d(y)$. Therefore $i < d(x) \le d(y) \le j$ and we see that $\partial_s^u$ is strictly filtered.
\end{proof}

We now list some examples of $RSF$-spaces obtained from this procedure.

\begin{example}\label{ex:torus}
The simplest non-trivial example is that of the torus endowed with a flat metric as discussed \cite[Section 38]{KM}. In this setup, the quantity $d(x)$  in (\ref{CMfrom-submod}) is given by $$d(x) = \begin{cases} 0 & \mathrm{ind}(x) >0  \\ -1 & \mathrm{ind}(x) = 0, \end{cases}$$ 
and the complex $\hat{C}_*(T^3,\spin_0)$ is given by
\[\begin{tikzcd}
	{\mathbb{Z}[U]} \\
	& {\mathbb{Z}[U]^{\oplus 3}} && {U^{-1}\mathbb{Z}[U]} \\
	&& {\mathbb{Z}[U]^{\oplus 3}}
	\arrow["U^{-1}", from=1-1, to=2-4]
\end{tikzcd}\]
where the $i$th column corresponds to the points with Morse index $3-i$, and the arrow is an isomorphism.
\end{example}
\begin{example}
In \cite[Section 37.4]{KM} the authors show also that the flat three-manifolds $Y$ with $b_1 = 1$ are $RSF$-spaces. For all but one spin$^c$ structure on $Y$, the Morse function can be chosen perfect with $d(x_1) = d(x_0) = 0$. For one spin$^c$ structure $\mathfrak s_0$, the Morse function can be chosen to have four critical points ($x_0, x_1$ of index $0$ and $y_0, y_1$ of index $1$) and $d(x_0) = d(x_1) = d(y_0) = -1$ while $d(y_1) = 0$. Notice that the flat 3-manifolds are precisely the mapping tori of finite-order diffeomorphisms of $T^2$. 
\end{example}

\begin{example}\label{spectral}
In \cite{spectrally-large} it is shown that for $g=2,3$, $S^1\times \Sigma_g$ is an $RSF$-space (for the unique torsion spin$^c$ structure).
\par
The proof uses ideas from spectral geometry. We say that a Riemannian $3$-manifold $Y$ is \textit{spectrally large} if the first eigenvalue of the Hodge Laplacian on coexact $1$-forms is large compared to the curvature (in a suitable quantitative sense). Under this hypothesis, one can then add perturbations of \textit{spinorial type} so that the equations do not have irreducible solutions, and the hypersurface $\mathsf{K}\subset \mathbb{T}_Y$ consisting of the locus where the Dirac operator has kernel is transversely cut out in the space of operators. Notice that the latter does not imply that $\mathsf{K}$ is smooth when $b_1\geq 4$. Now, under the technical assumption that for the spin$^c$ structure $\spin$ the hypersurface $\mathsf{K}$ is smooth, one can find a Morse perturbation satisfying the conditions of Definition \ref{maindef}; in particular $(Y,\spin)$ is an $RSF$-space. 

For a handful of $g$ including $g = 2,3$, the manifold $S^1 \times \Sigma_g$ can be shown to admit a spectrally large metric. For $g \le 3$ the hypersurface $\mathsf{K}$ can be taken to be smooth after perturbation; for $g \ge 4$, however, this cannot be done.
\end{example}

\begin{example}\label{mapping}
In \cite{LinTheta} it is shown that if $Y$ is the mapping torus of a finite order diffeomorphism $\varphi$ of a surface $\Sigma$ of genus $\geq 2$ with $\Sigma/\varphi=\mathbb{P}^1$, and $\spin$ is self-conjugate, then $(Y,\spin)$ is an $RSF$-space. In this context, $b_1=1$ and the locus $\mathsf{K}\subset\mathbb{T}_Y$ where the Dirac operator has kernel is an arbitrary conjugation-symmetric set of points (with multiplicity). So the critical set will be an even set of points and $d(x)$ will increase by the multiplicity of $\mathsf{K}$ as we pass between adjacent degree zero and degree $1$ critical points. As a consequence, one can obtain concrete examples for which the reduced Floer homology $\HMr_*(Y, \spin; \Gamma_\eta)$ has arbitrarily many $U$-torsion summands, with arbitrarily large length.
\end{example}

\subsection{Proof of Theorem \ref{thm:weak-calculation}}
In this section and what follows, we will frequently use the Mayer--Vietoris type sequence \begin{equation}\label{eq:MV1}
 0 \to H_3(W) \xrightarrow{p_3} H_3(X) \xrightarrow{\delta_3} H_2(Y) \xrightarrow{i_2 - j_2} H_2(W)\xrightarrow{p_2}  H_2(X)\xrightarrow{\delta_2} \cdots
\end{equation}
as well as the sequences which are related to this one by Poincar\'e duality and chain-level duality
\begin{align*}
    \cdots &\to H_3(W, \partial W) \to H_2(Y) \to H_2(X) \to H_2(W, \partial W) \to H_1(Y) \to \cdots\\
    0 &\to H^1(W, \partial W) \to H^1(X) \to H^1(Y) \to H^2(W, \partial W) \to H^2(X) \to \cdots \\
    \Bbb Z &\to H^1(X) \to H^1(W) \to H^1(Y) \to H^2(X) \to H^2(W) \to \cdots.
\end{align*}
It is useful to keep in mind the intersection-theoretic definition of $\delta_k$: one takes a generic $k$-cycle in $X$ and intersects it with $Y$ to give a $(k-1)$-cycle in $Y$. This is Poincar\'e dual to the map $$f^*_{4-k} : H^{4-k}(X) \to H^{4-k}(Y)$$ given by pullback along the inclusion.

Observe now that because $b^+(X) > 0$ and $b^+(W) = 0$, the map $p_2: H_2(W) \to H_2(X)$ is not surjective, so the exactness of (\ref{eq:MV1}) implies that there exists some $[\bar \nu] \in H_2(X;\Bbb R)$ for which $$[\eta] = [\bar \nu \cap Y] \ne 0 \in H_1(Y; \Bbb R),$$ and we choose chain-level representatives for these classes. Applying Proposition \ref{prop:gluing}, we find $$\mathfrak m(X, \spin_W) = \lim_{t \to 0} \mathfrak m(X, \spin_W, t[\bar \nu]) = \lim_{t \to 0} \textup{Tr}\big(\HMr_*(W, \spin_W; \Gamma_{t\nu})\big).$$
If $b^+(W) > 0$ it follows from Lemma \ref{lemma:CM-from-map} that this quantity is zero. When $b^+(W) = 0$, we will relate this to the associated graded map discussed above by way of the classic Hopf trace formula: if $C$ is a finite-dimensional $\Bbb Z/2$-graded chain complex, then the alternating trace of a map $\phi: C \to C$ coincides with the alternating trace of the induced map on homology $\phi_*$. We will also use the elementary observation that the trace of a filtered map is the same as the trace of its associated graded map, because the trace of an upper-triangular matrix is independent of the entries above the diagonal. 

\begin{proof}[Proof of Theorem \ref{thm:weak-calculation}]
In our setting the differential $\hat{\partial}$ and the map $p$ are filtered by definition, and this can be applied as follows. Observe that when $t\eta \ne 0$, the Morse homology $H_*(\mathbb T_Y, f; \Gamma_{t\eta})$ is trivial. Recall that we normalize the quantity $d(q)$ from (\ref{CMfrom-submod}) so that $d(q) \le 0$; say $d(q) = k \le 0$ is the minimum value. To have a concrete example in mind, in the case of the torus $T^3$ in Example \ref{ex:torus}, we have $$\text{gr}_i \hat{C}(T^3, \spin_0; \Gamma_\eta) = \begin{cases} C_*(T^3, f; \Gamma_\eta) & i \ge 0 \text{ and is even} \\ \langle x_0\rangle & i = -2 \\ 0 & \text{otherwise} \end{cases}$$ Here $k = d(x_0) = -1$ is the minimum value. 

In particular, the associated graded complex is $C_*(\mathbb T_Y, f; \Gamma_{t\eta})$ for $i \ge 0$ and zero for $i < 2k$, so we have $E^2_i = 0$ in the associated spectral sequence converging to $\HMr_*(Y, \spin; \Gamma_{t \eta})=\HMf_*(Y, \spin; \Gamma_{t \eta})$ except possibly when $i$ is an even integer satisfying $2k \le i \le -2$. In particular, $E^2$ and all successive pages of the associated spectral sequence are finite-dimensional. Applying Hopf's trace formula to the successive pages of the spectral sequence, we find
\begin{align*}
    \mathfrak m(X, \spin_W) &= \lim_{t \to 0} \text{Tr}\left(HM_*(W, \spin_W; \Gamma_{t\nu})\right) = \lim_{t \to 0} \sum_{i=k}^{-1} \text{Tr}(E^\infty_{2i}(\hat{m}(W, \spin_W; \Gamma_{t \nu})\big) \\
    &= \lim_{t \to 0} \sum_{i=k}^{-1} \text{Tr}(E^r_{2i}(\hat{m}(W, \spin_W; \Gamma_{t \nu})\big) 
\end{align*}
for all $r \ge 2$, where $$E^r_{2i}(\hat{m}(W, \spin_W; \Gamma_{t \nu})):E^r_{2i}(\hat{C}(Y, \spin; \Gamma_{t \eta}))\to E^r_{2i}(\hat{C}(Y, \spin; \Gamma_{t \eta}))$$ is the map induced on the $E^r$ page of the spectral sequence. 

The $E^2$ page of the spectral sequence coincides with the homology of the associated graded complex $\text{gr}_{2i} \hat{C}(Y, \spin_W; \Gamma_{t \eta})$; notice that for each $i$ this can be naturally identified with a subcomplex of $C_*(\mathbb T_Y, f; \Gamma_{t \eta})$ which, as an $R$-module, is independent of $t$.
Applying Hopf's formula once more, we see $$\mathfrak m(X, \spin_W) = \lim_{t \to 0} \sum_{i=k}^{-1} \text{Tr}\left(U^d m_{W, t\nu} \lvert_{\text{gr}_{2i} \hat{C}(Y, \spin; \Gamma_{t \eta})}\right)$$ for $d$ as in Theorem \ref{thm:constant-term}. Because the domain is independent of $t$ and the map is continuous in $t$, taking the limit as $t \to 0$ gives 
\begin{equation}\label{eqn:trace-formula}\mathfrak m(X, \spin_W) = \sum_{i=k}^{-1} \text{Tr}\left(U^d m_W \lvert_{\text{gr}_{2i} \hat{C}(Y, \spin)}\right).\end{equation}

From this we immediately conclude our rigidity result that $\mathfrak m(X, \spin_W)$ depends only on $m_W, d$, and $\text{gr}_{2i} \hat{C}(Y, \spin)$, because the map $m_W$ depends only on the diagram (\ref{eq:correspondence}), the integer $d$ depends only on the characteristic numbers of $W$ and the spin$^c$ structure $\spin_W$, and the complex depends only on $(Y, \spin)$. 

We have proved much of Theorem \ref{thm:weak-calculation}; all that remains is to show that this trace vanishes when the three listed conditions are not met. That the Seiberg--Witten invariant $\mathfrak m(X, \spin_W)$ vanishes for $b^+(W) > 0$ was discussed above. Because $m_W$ has degree $b_1(W) - b_1(Y)$, we see that even when $b^+(W) = 0$ the trace can only be nonzero if $b_1(W) = b_1(Y)$. Similarly, this trace can only be nonzero if $d = 0$, which when $b_1(W) = b_1(Y)$ is equivalent to $d(\spin_W) = 0$.
\end{proof}

It is worth naming the dependency of the constant in \eqref{eqn:trace-formula}.

\begin{definition}
Suppose $(X, Y, \mathfrak s_Y)$ satisfy the hypotheses of Theorem \ref{thm:weak-calculation}. We write \[c(W, Y, \spin_Y) = \sum_{i=k}^{-1} \text{Tr}\left(m_W \lvert_{\text{gr}_{2i} \hat{C}(Y, \spin)}\right)\] for the quantity arising in \eqref{eqn:trace-formula} when $d = 0$.
\end{definition}

\begin{remark}\label{rmk:sign-ambiguity}
    There is a sign ambiguity in the definition of $c(W, Y, \spin_Y)$, as the map $m_W$ depends on a choice of a homology orientation for $W$. In particular, the sign ambiguity is independent of $\spin_Y$; summing $c(W, Y, \spin_Y)$ over all $\spin_Y$ gives an integer, well-defined up to sign.
\end{remark}

\section{Examples} \label{c-comp}
Theorem \ref{thm:weak-calculation} is useful if we can actually compute this quantity $c(W,Y, \spin)$. We do this in several examples. Throughout this section, we suppose $(Y, \spin)$ is an RSF-space and $W: Y \to Y$ is a cobordism with $b^+(W) = 0$ and $b_1(W) = b_1(Y)$. Notice that the latter condition allows us to define the discriminant as in (\ref{discriminant})
\par
When $Y$ has positive scalar curvature and $b_1(Y) > 0$, the family of Dirac operators on $\mathbb T_Y$ has no kernel, so $d(q) = 0$ for all critical points $q$ and therefore $HM_*(Y, \spin; \Gamma_\eta)$ is zero for any spin$^c$ structure on $Y$. Of course, it follows that $c(W,Y, \spin) = 0$. 
\par
As a less tautological example, we compute the quantity $c(W,Y, \spin_0)$ in Theorem \ref{thm:weak-calculation} for $\spin_0$ the torsion spin$^c$ structure on $T^3$. 

\begin{prop}\label{prop:torus-calc}
For $Y = T^3$, the quantity $c(W,Y, \spin_0)$ in Theorem \ref{thm:weak-calculation} is equal up to sign to the natural number $\textup{disc}(W)$ in (\ref{discriminant}).
\end{prop}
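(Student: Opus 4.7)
The plan is to unravel the definition of $c(W, T^3, \spin_0)$ from the proof of Theorem \ref{thm:weak-calculation} and reduce it to a signed count in the torus $\mathbb T_W$. By Example \ref{ex:torus}, the shift $d(q)$ for critical points of the chosen Morse function $f$ on $\mathbb T_{T^3} = T^3$ equals $-1$ at the unique minimum $x_0$ and $0$ at all other critical points. Unwinding the associated graded of the $U$-filtration on $\hat C_*(T^3, \spin_0)$, one sees that among the terms with $i\in[k,-1]=\{-1\}$ only $\text{gr}_{-2}\hat C = \langle x_0 U^{-1}\rangle$ is nonzero, and it is one-dimensional.

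Next I compute the integer $d$ appearing in Lemma \ref{lemma:CM-from-map}. Since $b_1(W) = b_1(Y) = 3$, the formula recorded in the remarks after Theorem \ref{thm:constant-term} gives $d = \tfrac{1}{2} d(\spin_W)$, which is forced to be zero by the hypotheses of Theorem \ref{thm:weak-calculation}. Extending $m_W$ $U$-linearly to $\bar C$, so that $m_W(x_0 U^{-1}) = m_W(x_0) U^{-1}$, the trace then reduces to the single matrix coefficient
\[
c(W, T^3, \spin_0) = \langle m_W(x_0),\, x_0\rangle_{C_*(T^3, f)}.
\]

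To evaluate this, I recall from Section \ref{negative} that $m_W$ counts triples $(\gamma_-, \gamma_+, w)$ with $w \in \mathbb T_W$, $\gamma_-(0) = i_-^*(w) \in U_{x_0}$ and $\gamma_+(0) = i_+^*(w) \in S_{x_0}$. Because $x_0$ is a minimum, its unstable manifold $U_{x_0}$ reduces to the single point $\{x_0\}$, while the stable manifold $S_{x_0} = W^s(x_0) \subset T^3$ is open and dense, its complement being the union of stable manifolds of critical points of positive Morse index, each of strictly smaller dimension. Hence the matrix coefficient is the signed count of $w \in \mathbb T_W$ with $i_-^*(w) = x_0$ and $i_+^*(w) \in W^s(x_0)$.

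Finally, the map $i_-^*\colon \mathbb T_W \to \mathbb T_{T^3}$ is an affine homomorphism of $3$-tori of determinant $\det(i^*)$, so $(i_-^*)^{-1}(x_0)$ is a finite subset of $\mathbb T_W$ of $|\det(i^*)|$ points whose signed count equals $\det(i^*)$. For a generic translation of $i_+^*$, which is part of the auxiliary data used to define $m_W$, the finite image set $i_+^*\bigl((i_-^*)^{-1}(x_0)\bigr)$ avoids the positive-codimension subset $T^3 \setminus W^s(x_0)$, so every point in the fiber contributes. The signed count therefore equals $\pm\det(i^*) = \pm\textup{disc}(W)$, proving the proposition. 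The main subtlety I expect is sign bookkeeping, but because the claim is only made up to sign, the argument reduces to the unambiguous fact that the degree of an affine map of $3$-tori is the determinant of its linear part.
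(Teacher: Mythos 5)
Your argument is correct, and it reaches the same reduction point as the paper---everything comes down to evaluating $m_W$ on the unique index-zero generator $x_0$, since $\mathrm{gr}_{-2}\hat{C}(T^3,\spin_0)=\langle x_0\rangle$ is the only graded piece contributing to $c(W,T^3,\spin_0)$ and $d=\tfrac12 d(\spin_W)=0$---but you execute that evaluation by a genuinely different route. The paper notes that the Morse function on $\mathbb T_{T^3}$ is perfect, so the chain map $m_W$ coincides with the induced map on homology, which by \cite[Section 2.8]{KM} is the composite $(r_+)_*\circ PD^{-1}_{\mathbb T_W}\circ r_-^*\circ PD_{\mathbb T_Y}$; evaluating in degree zero then reduces to the algebraic fact that $\Lambda^3$ of the restriction map is multiplication by $\det(i^*)$. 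You instead stay at the chain level and use the defining count of triples $(\gamma_-,\gamma_+,w)$: since $x_0$ is the minimum, $U_{x_0}$ is a single point, so the matrix coefficient is the signed number of $w\in\mathbb T_W$ with $i_-^*(w)=x_0$ and $i_+^*(w)\in S_{x_0}$, which for a generic translation is the algebraic degree $\det(i^*)$ of the affine map $i_-^*$; your dimension count and the genericity argument for avoiding the lower strata of the stable manifold are both fine. The homological route buys uniformity in signs and degenerate cases: it yields $m_W(x_0)=\pm\det(i^*)\,x_0$ even when $\det(i^*)=0$, whereas your phrase ``finite subset of $|\det(i^*)|$ points'' tacitly assumes $\det(i^*)\neq 0$ and needs the one-line addendum that for $\det(i^*)=0$ the image of $i_-^*$ is a lower-dimensional coset, so a generic translate makes the fiber empty and the count is $0=\mathrm{disc}(W)$. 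Your geometric route, in exchange, makes the appearance of $\mathrm{disc}(W)$ as the covering degree of $\mathbb T_W\to\mathbb T_{Y}$ completely transparent and does not invoke perfectness of the Morse function.
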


\begin{proof}
It was established in the proof of Theorem \ref{thm:weak-calculation} that the quantity $c(W,Y, \spin_0)$ is equal to $$\text{Tr}\left(m_W | \langle x_0\rangle\right).$$ Because the Morse function is perfect, the map $m_W$ is the same as the map it induces on homology, which is computed in \cite[Section 2.8]{KM} to be the composition $$(r_+)_* \circ PD^{-1}_{\mathbb T_W} \circ r_-^* \circ PD_{\mathbb T_Y},$$ where $$T^3 \xleftarrow{r_-} T^3 \xrightarrow{r_+} T^3$$ is the diagram induced by the pair of inclusions  $i,j:Y\rightarrow W$. Generally, if $f: T^3 \to T^3$ is any linear map, we have $$f_*(x) = \begin{cases} x & |x| = 0 \\ \det(f) x & |x| = 3, \end{cases}$$ and similarly for $f^*(x)$. The computation in middle degrees is somewhat more subtle; the map in degree $k$ can be computed in terms of the matrix of $k \times k$ minors of $f$. 

Thus, we have \begin{align*}m_W(x_0) &= \pm (r_+)_* \; (PD^{-1}_{\mathbb T_W}) \; (r_-^*)(x_3) = \pm (r_+)_* \; (PD^{-1}_{\mathbb T_W}) \; (\det(i) x_3) \\
&= \pm (r_+)_* \; (\det(i) x_0) = \pm \det(i) x_0.\end{align*} The discriminant in (\ref{discriminant}) is, by definition, $\det(i)$. 
\end{proof}

\begin{remark}
    This argument is where the sign ambiguity in Theorem \ref{thm:torus-rigidity} appears. Homology orientations on $Y$ and $W$ are used to pin down the sign on the Poincar\'e duality maps above, which pins down the sign in the definition of $\mathfrak m(X)$ as well. We do not see the appeal in working out the homology orientations in detail, so choose to record the result up to sign. The sign ambiguity is the same as in Remark \ref{rmk:sign-ambiguity}.
\end{remark}

This argument goes through with minimal change for the other flat $3$-manifolds with $b_1(Y) = 1$. For all but one spin$^c$ structure $\spin_Y$, the Dirac operator on $Y$ has no kernel, and hence like in the example of PSC manifolds, spin$^c$ classes on $X$ restricting to something other than $\spin_Y$ contribute nothing to the computation of $\mathfrak m(X)$. For the following proposition, write $\spin_0$ for the unique spin$^c$ structure which admits spin$^c$ connections with parallel spinors.

\begin{prop}\label{prop:flat-calc}
If $(Y, \spin_0)$ is a flat 3-manifold with $b_1(Y) = 1$ equipped with its canonical spin$^c$ structure, then the quantity $c(W,Y, \spin_0)$ in Theorem \ref{thm:weak-calculation} is equal up to sign to the natural number $\textup{disc}(W)$ defined as in (\ref{discriminant}).
\end{prop}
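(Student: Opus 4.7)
The plan is to adapt the proof of Proposition \ref{prop:torus-calc}, accounting for the fact that in the present case the Morse function on $\mathbb{T}_Y \cong S^1$ must use four rather than two critical points in order to avoid the locus $\mathsf{K}$ where the Dirac operator has kernel.

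First I would identify the relevant associated graded piece appearing in the formula for $c(W,Y,\spin_0)$ proved in the course of Theorem \ref{thm:weak-calculation}. Using the Morse data of Section \ref{exman}, normalized so that the maximum value of $d$ is zero, the only index contributing to the sum is $i=-1$, giving
\begin{equation*}
\text{gr}_{-2}\hat{C}(Y,\spin_0) \;=\; \langle x_0,x_1,y_0\rangle \cdot U^{-1},
\end{equation*}
equipped with the Morse differential, which sends $y_0 U^{-1}$ to $\pm(x_1-x_0)U^{-1}$. The hypotheses $b_1(W)=b_1(Y)=1$ and $d(\spin_W)=0$ force the shift from Lemma \ref{lemma:CM-from-map} to be $d=0$, so the associated graded map in question is just $m_W$ itself.

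Second, I would apply Hopf's trace formula to reduce the supertrace of $m_W$ on this three-dimensional complex to the supertrace of the induced map on its homology. A direct computation (using the differential above) gives $H_0(\text{gr}_{-2}\hat{C})\cong\mathbb{Z}$, generated by $[x_0]=[x_1]$, and $H_1(\text{gr}_{-2}\hat{C})=0$; furthermore the natural inclusion $\text{gr}_{-2}\hat{C}\hookrightarrow C_*(\mathbb{T}_Y,f)$ realizes the former as the isomorphism $H_0(\text{gr}_{-2}\hat{C})\xrightarrow{\cong} H_0(\mathbb{T}_Y)$.

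Third, the description of the $RSF$ structure, in which $p$ is filtered with associated graded the natural inclusion, forces the associated graded of $\hat m(W)$ on $\text{gr}_{-2}\hat C$ to be compatible with its associated graded on $\text{gr}_{-2}\bar C \cong C_*(\mathbb{T}_Y,f)$ along the inclusion. Hence the induced map on $H_0(\text{gr}_{-2}\hat{C})\cong H_0(\mathbb{T}_Y)$ coincides with the pullback-pushforward map on $H_0(\mathbb{T}_Y)$ computed exactly as in Proposition \ref{prop:torus-calc}: applied to the class of a point on the circle, this gives multiplication by $\deg(i^*)=\pm\textup{disc}(W)$. Combined with the vanishing of $H_1$, we obtain $c(W,Y,\spin_0)=\pm\textup{disc}(W)$ as claimed.

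The main point requiring care will be the third step, where one must verify that the associated graded of $\hat m$ really does preserve the subspace $\langle x_0,x_1,y_0\rangle\subset C_*(\mathbb{T}_Y,f)$: once this compatibility with the inclusion is established via the $RSF$ hypothesis, the computation reduces to the same pullback-pushforward argument already used in the torus case, with the harmless bookkeeping that one extra critical point $y_1$ of Morse index $1$ sits outside the relevant subcomplex but does not appear in the final answer because it lies strictly above the filtration level used.
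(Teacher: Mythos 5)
Your proposal is correct and follows essentially the same route as the paper: it isolates the graded piece $\langle x_0,x_1,y_0\rangle$ at filtration level $-2$ and reduces the supertrace to the homology-level pullback--pushforward computation of Proposition \ref{prop:torus-calc}. The only (harmless) difference is that you apply Hopf's formula directly to this subcomplex, using $H_1=0$ and the fact that $m_W$ acts on $H_0\cong H_0(\mathbb T_Y)$ by $\pm\det(i^*)$, whereas the paper evaluates the same trace by passing to the quotient $\langle y_1\rangle$ and computing on the top-degree class.
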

Notice that by (\ref{eq:MV1}) in order for the glued up manifold $X$ to have $b^+\geq1$ we have to assume that the maps (\ref{eq:correspondence}) coincide or, equivalently, that $H_2(Y) \to H_2(X)$ is injective.

\begin{proof}
As discussed above, the Morse function $f: S^1 \to \Bbb R$ has four critical points $x_0, x_1, y_0, y_1$ with $d(x_0) = d(x_1) = d(y_0) = -1$ and $|x_i| = 0$ while $|y_i| = 1$; the associated graded complex is only relevant for $i = -1$, in which case we obtain $\langle x_0, x_1, y_0\rangle$. The trace of $m_W$ on this subcomplex is the negative of the trace of $m_W$ on the quotient complex $\langle y_1\rangle$, where it can be computed as in Proposition \ref{prop:torus-calc} to equal $\pm \det(i^*)$. (In fact, the map $\bar m_*$ must be given by scaling by $\det(i^*)$ on every generator.)
\end{proof}

More generally, we have the following.

\begin{prop}\label{prop:case-i-equals-j}
Suppose the maps in (\ref{eq:correspondence}) coincide. Then $$c(W,Y,\spin) = \pm \textup{disc}(W)\cdot\chi(HM_*(Y, \spin; \Gamma_\eta)),$$
where again $\textup{disc}(W)$ is defined as in (\ref{discriminant}).
\end{prop}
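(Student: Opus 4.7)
The plan is to build directly on the trace computation carried out in the proof of Theorem \ref{thm:weak-calculation}. Under the non-vanishing conditions $b^+(W)=0$, $b_1(W)=b_1(Y)$ and $d(\spin_W)=0$, that argument already identifies
\[
c(W,Y,\spin) \;=\; \sum_{i=k}^{-1}\textup{Tr}\!\left(m_W\big|_{\textup{gr}_{2i}\hat C(Y,\spin)}\right),
\]
with $\textup{Tr}$ the mod-$2$ supertrace. My first step would be to evaluate the chain-level Morse-theoretic map $m_W$ directly in the case $i^* = j^* = \phi$.

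In this case the map $(\phi,\phi)\colon\mathbb T_W\hookrightarrow \mathbb T_Y\times\mathbb T_Y$ factors through the diagonal $\Delta$ as a $\det(\phi)$-fold cover. The intersection of the image with $U_a\times S_b$ is therefore the set of $w\in\mathbb T_W$ with $\phi(w)\in U_a\cap S_b$. The degree-zero constraint forces $|a|=|b|$, and for such pairs $U_a\cap S_b=\emptyset$ when $a\ne b$ while $U_a\cap S_a=\{a\}$. Hence the intersection is empty for $a\ne b$ and consists of the $\det(\phi)$ preimages of $a$ under $\phi$ when $a=b$. At each point $(a,a)$ one checks the tangent-space identity $T\Delta + (T_a U_a\oplus T_a S_a) = T_a\mathbb T_Y\oplus T_a\mathbb T_Y$, which holds because $T_a U_a$ and $T_a S_a$ are complementary at a nondegenerate critical point; consequently the intersection is transverse and no further perturbation of $\mathbb T_W$ is needed. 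In the $t\to 0$ limit used in the proof of Theorem \ref{thm:weak-calculation}, the $\Gamma_{t\eta}$-holonomy weights attached to each of the $\det(\phi)$ preimages tend to $1$. Extending the computation of Proposition \ref{prop:torus-calc} from a perfect Morse function to the present setting, I would conclude that at the chain level
\[
m_W(a) \;=\; \pm\det(\phi)\cdot a
\]
for every critical point $a\in\textup{Crit}(f)$, with a global sign determined by orientations.

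Given this, $m_W$ acts as the scalar $\pm\det(\phi)$ times the identity on each $\textup{gr}_{2i}\hat C$, so its supertrace equals $\pm\det(\phi)\cdot\chi(\textup{gr}_{2i}\hat C)$. Summing, and using invariance of Euler characteristic under both the spectral sequence associated to the $U$-filtration and change of local coefficients — together with the observation that the pieces with $i\ge 0$ reproduce the full Morse complex of the torus $\mathbb T_Y$ and hence contribute $0$ to the Euler characteristic — gives
\[
\sum_{i=k}^{-1}\chi(\textup{gr}_{2i}\hat C) \;=\; \chi(HM_*(Y,\spin;\Gamma_\eta)).
\]
Multiplying by $\pm\det(\phi)$ and recalling that $|\det(i^*)|=\textup{disc}(W)$ by definition yields the claim.

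The main technical point is the chain-level identity $m_W = \pm\det(\phi)\cdot\textup{id}$. The calculation is conceptually clean because $\mathbb T_W\to\Delta$ is a covering map and the flow moduli $U_a\cap S_b$ is empty for $|a|=|b|$ whenever $a\ne b$; the obstacle is verifying the transversality assertion and confirming that no additional contributions from non-generic flow moduli arise upon the small perturbations required to define $m_W$ honestly, so that the intersection count is exactly $\det(\phi)$ on the diagonal and vanishes off it.
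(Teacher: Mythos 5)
Your proposal is correct and takes essentially the same route as the paper: the paper also reduces to showing that when $i^*=j^*$ the only degree-zero contributions to $m_W$ come from constant half-trajectories at a common critical point $q$, so that $m_W$ is chain-level multiplication by the signed number of points of $\mathbb{T}_W$ lying over $q$, i.e.\ $\pm\det(i^*)=\pm\operatorname{disc}(W)$, and then concludes by the same trace/Euler-characteristic bookkeeping already set up in the proof of Theorem \ref{thm:weak-calculation}. Your explicit transversality check along the diagonal and the spectral-sequence Euler-characteristic argument are simply more detailed renderings of steps the paper treats briefly.
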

\begin{proof}
Recall that we defined \[c(W,Y,\spin) = \sum_{i=k}^{-1} \text{Tr}\left(U^d m_W \lvert_{\text{gr}_{2i} \hat{C}(Y, \spin)}\right),\] summing only over those indices for which $\text{gr}_{2i} \hat{C}(Y, \spin, \Gamma_{t\eta})$ is not acyclic. Because the associated graded complex of $\hat{C}(Y,\spin, \Gamma_{t\eta})$ is independent of $t$, the Euler characteristic of each summand is independent of $t \in \mathbb R$; summing from $k$ to $-1$, the result is equal to $\chi(HM_*(Y, \spin; \Gamma_\eta))$. We will prove that the map $m_{W}: C_*(\mathbb T, f) \to C_*(\mathbb T, f)$ is scalar multiplication by $\pm \det(i^*) = \text{disc}(W)$, so the same is true of the restriction to each $\text{gr}_{2i} \hat{C}$, giving the stated result.

Because both endpoint maps in the correspondence $$\mathbb T_Y \leftarrow \mathbb T_W \to \mathbb T_Y$$ are the same (say, $i: \mathbb T_W \to \mathbb T_Y$) by assumption, in the equation $$r(w) = (\gamma_-(0), \gamma_+(0))$$ we are counting Morse trajectories together with an element $w \in \mathbb T_W$ mapping to $\gamma_-(0)=\gamma_+(0)$. Because we are counting trajectories between points of the same Morse index, we must have that $\gamma_\pm$ are constant trajectories at the same critical point $q \in \text{Crit}(f)$. Thus we are computing the signed number of $w \in \mathbb T_W$ which map to $q \in \mathbb T_Y$, which coincides with $\det(i^*)$ up to sign.
\end{proof}

Looking back at Section \ref{exman}, for $Y=T^3$ the Euler characteristic term is $\pm1$, while the Euler characteristic term for $S^1 \times \Sigma_g$ is $\pm 2$ for $g=2$ and $\pm 6$ for $g=3$. The mapping tori of Example \ref{mapping} provide examples of manifolds for which the Euler characteristic term is arbitrarily large.
\\
\par
A particularly interesting example is the case of $Y=S^1\times \Sigma_2$. Write $\varphi: S^1 \times \Sigma_2 \to S^1 \times \Sigma_2$ for a diffeomorphism of the form $\mathrm{id}_{S^1} \times \phi_{\Sigma_2}$. 

\begin{prop}
    Suppose $(Y, \spin_0)$ is $S^1 \times \Sigma_2$ with its unique torsion spin$^c$ structure. If $(W, \spin_W)$ is a self-cobordism of $(Y, \spin_0)$ for which $i^* = j^*$ and $X$ is obtained by gluing the ends together using $\varphi$, then $$ \mathfrak m(X, \spin_W) = \pm\textup{disc}(W)\cdot(\mathrm{Tr}(\phi_*)-2)$$where $\phi_*$ denotes the action on $H_1(\Sigma_2)$.  
\end{prop}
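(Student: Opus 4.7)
The plan is to extend Theorem \ref{thm:weak-calculation} to the $\varphi$-twisted gluing and then compute a Lefschetz-type number using the RSF-space structure of $(S^1 \times \Sigma_2, \spin_0)$ from Example \ref{spectral}. First I would reinterpret $X$ as arising from a self-cobordism $W_\varphi \colon Y \to Y$ whose underlying $4$-manifold is $W$ but whose right-hand boundary is identified with $Y$ via $\varphi$ rather than the identity. Because $\varphi$ preserves $\spin_0$, the spin$^c$ structure $\spin_W$ still extends across the glued ends, and the intrinsic invariants $b^+(W)$, $b_1(W)$, and $d(\spin_W)$ are unchanged. The gluing formula of Proposition \ref{prop:gluing} and its $b^+(X)=1$ extension generalize to the $\varphi$-twisted setting: for a suitable $2$-chain $\bar{\nu}$ giving rise to a local system morphism $\Gamma_\nu$, one has
$$
 \mathfrak m(X, \spin_W, [\bar\nu]) = \textup{Tr}\bigl( \varphi_* \circ \HMr_*(W, \spin_W; \Gamma_\nu) \bigr),
$$
where $\varphi_*$ is the diffeomorphism-induced automorphism of $\HMr_*(Y, \spin_0; \Gamma_\eta)$. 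This is the natural TQFT-style extension: invariants of mapping-torus-like constructions are computed by supertraces of the monodromy.

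Next I would run the argument of Theorem \ref{thm:weak-calculation}. Taking $t \to 0$ and using the RSF-space property, the trace above reduces to the associated graded trace of $\varphi_* \circ \hat m(W)$ with respect to the $U$-filtration on $\hat C$. Lemma \ref{lemma:CM-from-map} identifies the associated graded of $\hat m(W)$ with $U^d m_W = m_W$ since $d(\spin_W) = 0$, and because $i^* = j^*$ the argument of Proposition \ref{prop:case-i-equals-j} identifies $m_W$ with $\pm \textup{disc}(W) \cdot \mathrm{id}$ on the Morse complex $C_*(\mathbb T_Y, f)$. Combining these gives
$$
 \mathfrak m(X, \spin_W) = \pm \textup{disc}(W) \sum_{i = k}^{-1} \textup{Tr}\bigl( \varphi_* \lvert_{\textup{gr}_{2i}\hat C(Y, \spin_0; \Gamma_\eta)} \bigr).
$$

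The main obstacle is showing that this Lefschetz-type sum equals $\textup{Tr}(\phi_* \lvert H_1(\Sigma_2)) - 2$. Invoking Example \ref{spectral}, the locus $\mathsf K \subset \mathbb T_Y = S^1 \times \textup{Jac}(\Sigma_2)$ where the Dirac operator has kernel is an $S^1$-family of translated Abel--Jacobi copies of $\Sigma_2 \hookrightarrow \textup{Jac}(\Sigma_2)$, across which the shift $d(q)$ jumps by $1$. Choosing the Morse perturbation $\varphi$-equivariantly, which is possible via the product structure $\varphi = \mathrm{id}_{S^1} \times \phi$, the graded pieces $\textup{gr}_{2i} \hat C$ become $\varphi_*$-invariant subcomplexes. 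The technical heart of the computation is then to identify $\HMr_*(S^1 \times \Sigma_2, \spin_0; \Gamma_\eta)$, as a graded $\textup{Diff}(Y)$-module, with an appropriately shifted version of $H_*(\Sigma_2)$, so that $\varphi_*$ acts as $\phi_*$ in the natural way. The resulting supertrace then equals $-L(\phi, \Sigma_2) = \textup{Tr}(\phi_* \lvert H_1(\Sigma_2)) - 2$, which combined with the previous display yields the stated formula.
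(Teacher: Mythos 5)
Your overall route is the paper's: after a twisted self-gluing formula, everything reduces to the supertrace of $\textup{disc}(W)\cdot\varphi_*$ acting on $\HMr_*(S^1\times\Sigma_2,\spin_0;\Gamma_\eta)\cong H_*(\Sigma_2;\mathbb{R})$, and the identification of this group as a mapping-class-group module (which you call the technical heart and leave unproved) is precisely the input the paper itself invokes from the computation behind Example \ref{spectral}; granting it, the supertrace is $\pm(\mathrm{Tr}(\varphi_*)-2)$ as you say. However, one intermediate step of your argument fails as written, and one smaller point is left unverified.

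The claim that the Morse perturbation on $\mathbb{T}_Y$ can be chosen $\varphi$-equivariantly ``via the product structure'' is false in general. The action of $\varphi$ on $\mathbb{T}_Y\cong T^5$ is through $1\oplus\phi^*$ on $H^1(S^1)\oplus H^1(\Sigma_2)$, and when $\phi_*$ has a hyperbolic factor --- exactly the Anosov-type gluing this proposition is designed for --- any $\varphi$-invariant smooth function is constant along the dense orbits in the hyperbolic directions, so no invariant Morse function exists and the graded pieces $\mathrm{gr}_{2i}\hat{C}$ cannot be made $\varphi_*$-invariant subcomplexes this way. The paper avoids this entirely: it never compares chain complexes for $Y$ and its $\varphi$-pullback data, but instead absorbs the twist into the correspondence, viewing $X\setminus Y$ as the cobordism $W$ with correspondence $(i^*,\varphi^*j^*)$. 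Then Proposition \ref{prop:case-i-equals-j} (for the untwisted pair, giving multiplication by $\textup{disc}(W)$) together with naturality of the isomorphism $\HMr_*(S^1\times\Sigma_2,\spin_0;\Gamma_\eta)\cong H_*(\Sigma_2;\mathbb{R})$ under the mapping class group shows that the induced map on homology is $\textup{disc}(W)\,\varphi_*$, and the trace is taken there --- no equivariant perturbation and no chain-level Lefschetz sum are needed; if you want to stay at chain level, the filtered structure should instead come from Theorem \ref{thm:constant-term} applied to this single twisted correspondence. Separately, you assert the existence of ``a suitable $2$-chain $\bar\nu$'' without checking it: in the twisted gluing one needs $[\eta]\neq 0$ with $i^*[\eta]=\varphi^*j^*[\eta]$ so that $\eta$ extends over $X$, which is why the paper takes $\eta$ a nonzero multiple of the $S^1$ factor (fixed by $\varphi^*$), and it is for that choice of $\eta$ that the identification with $H_*(\Sigma_2;\mathbb{R})$ is available.
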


\begin{proof}
Choose $\eta$ to be a non-zero multiple of $S^1$. We have $i^* [\eta] = \varphi^* j^* [\eta]$ so $\eta$ extends to a 2-chain $\bar \nu$ on $X$ as in Proposition \ref{prop:gluing}. For this $\eta$ we have
\begin{equation*}
\mathit{HM}_*(S^1\times \Sigma_2,\spin_0;\Gamma_{\eta})\cong H_*(\Sigma_2;\mathbb{R})
\end{equation*}
as $\mathbb{Z}_2$-graded vector spaces. Explicitly, following \cite[Corollary 4.5]{spectrally-large}, we consider the Abel--Jacobi map $AJ: \Sigma_2 \to \text{Jac}(\Sigma_2)$, which is an embedding. Identifying $\mathbb T_Y \cong S^1 \times \text{Jac}(\Sigma_2)$, there exists a perfect Morse function $f: \mathbb T_Y \to \mathbb R$ as in Section \ref{exman}, for which $\mathsf{K}$ is a sphere bundle around the image of $AJ$. We have $d(x) = 0$ for critical points in one component of the complement and $d(x) = -1$ in the other. The latter component is diffeomorphic to $\Sigma_2 \times D^3$, and the Morse function may be chosen to have exactly six critical points in this component. The isomorphism \[\mathit{HM}_*(S^1\times \Sigma_2,\spin_0;\Gamma_{\eta}) \cong H_* \text{gr}(\widehat{CM}_*(S^1 \times \Sigma_2, \spin_0; \Gamma_\eta)) \cong H_*(\Sigma_2;\mathbb R)\] then arises because the associated graded complex $\text{gr}_{-2}$ is equal to $C_*(\Sigma_2;\mathbb R)$, while all other associated graded terms are acyclic. 

Because the Morse function $f$ is perfect, the cobordism map induced by the pair $(1, \varphi^*)$ on the chain level coincides with the map the correspondence of tori induces on homology, which coincides with $\varphi_*$. Because the map induced by AJ on homology commutes with the action of the mapping class group, we see the restriction to $C_*(\Sigma_2;\mathbb R)$ also coincides with $\varphi_*$. The map induced by the pair $(i^*, j^*)$ is $\text{disc}(W)$ by Proposition \ref{prop:case-i-equals-j}, so the pair $(i^*, \varphi^* j^*)$ induces $\varphi_*$ scaled by $\text{disc}(W)$. The stated formula follows.
\end{proof}
As a concrete example, consider the mapping class of $\Sigma_2$ obtained as the connect sum of the identity on $T^2$ and the Anosov map induced by the matrix
\begin{equation*}
\begin{bmatrix}
2&1\\
1&1
\end{bmatrix}.
\end{equation*}
Then $X$ is a homology four-torus with determinant $1$, and we obtain that $$\sum_{\spin\lvert_Y=\spin_0}\mathfrak{m}(X,\spin)=\pm3.$$ In fact, $X$ is obtained from $T^4$ by doing knot surgery \cite{FintushelStern} along a standard $T^2$ using the figure eight knot, and the value of $3$ we found corresponds to the constant term in its Alexander polynomial. The non-constant terms correspond to spin$^c$ structures which are non-torsion over $Y$, which our techniques do not have access to.

\section{Non-separating three-tori}\label{torus}
The main goal of this section is to prove Theorem \ref{thm:torus-rigidity} and Theorem \ref{cor:det-squared}, and discuss some concrete examples. 

\subsection{Proof of Theorem \ref{thm:torus-rigidity}}
Our first observation restricts attention to the case we can actually calculate, namely that of spin$^c$ structures on $X$ which are torsion on $Y$.

\begin{lemma}\label{lemma:Thurston}
Suppose $X$ is a closed, oriented, connected $4$-manifold with $b^+(X) \ge 2$, and that $X$ contains a non-separating hypersurface $Y$ with trivial Thurston norm. If $\mathfrak m(X, \spin_X) \ne 0$, then the restriction $\spin_X|_Y$ is torsion.
\end{lemma}
\begin{proof}
Let $\Sigma \subset Y$ be a torus, so in particular $[\Sigma] \cdot [\Sigma] = 0$ and $g(\Sigma) = 1$. First, the adjunction inequality \cite[Theorem 40.2.3]{KM} implies that if $\mathfrak m(X, \spin_X) \ne 0$, we have $$0 = 2g - 2 \ge |\langle c_1(\spin_X), [\Sigma]\rangle| + [\Sigma] \cdot [\Sigma] = |\langle c_1(\spin_X), [\Sigma]\rangle|.$$ It follows that $$0 = \langle c_1(\spin_X), [\Sigma]\rangle = \langle c_1(\spin_X)|_Y, [\Sigma]\rangle$$ for all tori $\Sigma$ in $Y$. Because $Y$ has trivial Thurston norm, the real homology $H_2(Y; \Bbb R)$ is generated by embedded tori, and it follows that $c_1(\spin_X)|_Y$ pairs trivially with every real homology class. By Poincar\'e duality, $c_1(\spin_X)|_Y$ is torsion.
\end{proof}

Now to prove Theorem \ref{thm:torus-rigidity}, assume $Y = T^3$. For a spin$^c$ structure on $W$ with $i^* \spin_W \cong j^* \spin_W \cong \spin_0$ the torsion spin$^c$ structure on $Y$, we established in Proposition \ref{prop:torus-calc} that $$c(W,Y,\spin_0) = \begin{cases} \pm \text{disc}(W) & \text{if }b^+(W) = 0, \; b_1(W) = b_1(Y), \; \text{and}\; d(\spin_W) = 0, \\ 0 & \text{otherwise}. \end{cases}$$
Applying Lemma \ref{lemma:Thurston}, re-indexing the sum, and applying Theorem \ref{thm:weak-calculation} gives

\begin{align*}
    \mathfrak m(X) &= \sum_{\spin_X} \mathfrak m(X, \spin_X) = \sum_{\substack{\spin_X \\ \spin_X|_Y \text{ torsion}}} \mathfrak m(X, \spin_X) = \sum_{\substack{\spin_W \\ i^* \spin_W \cong j^* \spin_W \\ i^* \spin_W \cong \spin_0}} \sum_{\substack{\spin_X \\ p^* \spin_X \cong \spin_W}} \mathfrak m(X, \spin_X) \\
    &= \sum_{\substack{\spin_W \\ i^* \spin_W \cong j^* \spin_W \\ i^* \spin_W \text{ torsion} \\ d(\spin_W) = 0}} c(W, Y, \spin_0).
\end{align*}
By Proposition \ref{prop:torus-calc}, these summands simplify to $\pm D(W) \text{disc}(W)$ when $b^+(W) = b_1(W) - b_1(Y) = 0$, and is otherwise zero; further, as discussed in Remark \ref{rmk:sign-ambiguity}, the equality $c(W, Y, \spin_Y) = D(W) \text{disc}(W)$ holds up to an \textit{overall} sign ambiguity.


\begin{remark}
    More generally, if $(Y, \spin)$ is an $RSF$-space with trivial Thurston norm, the combination of Lemma \ref{lemma:Thurston} and Theorem \ref{thm:weak-calculation} allow to completely determine $\mathfrak m(X)$ if one could compute the quantities $c(W,Y, \spin)$ in the case of interest.
\end{remark}

\subsection{Homology 4-tori}
Let $X$ be a homology 4-torus. The goal of this subsection is to deduce Theorem \ref{cor:det-squared} from Theorem \ref{thm:torus-rigidity} via computations in algebraic topology. Recall the definition of the determinant of $X$ in (\ref{determinant}); this is the class of the 4-form $\cup^4_X \in \Lambda^4 H^1(X;\Bbb Z) \cong \Bbb Z$ up to sign. We begin with the following lemma.

\begin{lemma}\label{lemma:det1}
Suppose $X$ is a rational homology $4$-torus and $Y \subset X$ is a non-separating $3$-torus. Then we have $\det(X) \ne 0$ if and only if the intersection map$$\delta_3: H_3(X) \to H_2(Y)$$has rank three, in which case $\det(X)$ is the index of $\textup{im}(\delta_3)$ in $H_2(Y)$.
\end{lemma}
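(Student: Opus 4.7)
The plan is to translate the lemma into a statement about the restriction map on first cohomology via Poincar\'e duality. Under the PD isomorphisms $H_3(X;\mathbb{Z})\cong H^1(X;\mathbb{Z})$ and $H_2(Y;\mathbb{Z})\cong H^1(Y;\mathbb{Z})$, the intersection map $\delta_3$ corresponds to the pullback $r = i^\ast: H^1(X;\mathbb{Z})\to H^1(Y;\mathbb{Z})$ along the inclusion $i:Y\hookrightarrow X$; this is the standard compatibility between the geometric description of $\delta_3$ (transverse intersection with $Y$) and pullback of cocycles, and it identifies $\operatorname{im}(\delta_3)\subset H_2(Y;\mathbb{Z})$ with $\operatorname{im}(r)\subset H^1(Y;\mathbb{Z})$. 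Thus it suffices to show that $r$ has rank three iff $\det(X)\ne 0$, and that in that case $\det(X)$ equals the index $[H^1(Y;\mathbb{Z}):\operatorname{im}(r)]$.

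Since $X$ is a rational homology torus, $H^1(X;\mathbb{Z})$ is free abelian of rank $4$. The class $[Y]^\ast := \operatorname{PD}_X[Y]\in H^1(X;\mathbb{Z})$ is primitive because $[Y]$ is primitive in $H_3(X;\mathbb{Z})$ (it is represented by a connected non-separating embedded submanifold), so one may extend it to an integral basis $x_1 := [Y]^\ast, x_2, x_3, x_4$. Because $Y$ is a two-sided closed hypersurface in an orientable manifold, its normal bundle is trivial; therefore
\begin{equation*}
r(x_1) = i^\ast\operatorname{PD}_X[Y] = e(\nu_{Y/X}) = 0.
\end{equation*}
Consequently $\operatorname{im}(r)$ is generated by the three vectors $r(x_2), r(x_3), r(x_4)\in H^1(Y;\mathbb{Z})\cong\mathbb{Z}^3$, so $r$ has rank at most $3$, with equality iff these three vectors are linearly independent over $\mathbb{Q}$.

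The main calculation is then a direct manipulation of the four-fold cup product. Using $[X]\cap x_1 = \pm[Y]$, naturality of cup products with respect to $i^\ast$, and the fact that on $Y = T^3$ the triple cup product defines an isomorphism $\Lambda^3 H^1(Y;\mathbb{Z})\xrightarrow{\cong}H^3(Y;\mathbb{Z})\cong\mathbb{Z}$, we compute
\begin{align*}
\det(X) &= \bigl|\langle x_1\cup x_2\cup x_3\cup x_4, [X]\rangle\bigr| \\
&= \bigl|\langle x_2\cup x_3\cup x_4, [X]\cap x_1\rangle\bigr| \\
&= \bigl|\langle r(x_2)\cup r(x_3)\cup r(x_4), [Y]\rangle\bigr|.
\end{align*}
This final quantity is the absolute value of the determinant of the $3\times 3$ matrix expressing $r(x_2), r(x_3), r(x_4)$ in an integral basis of $H^1(Y;\mathbb{Z})$. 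It vanishes iff those vectors are linearly dependent, and otherwise equals the index of the sublattice they generate in $H^1(Y;\mathbb{Z})$; transporting back through PD gives the claim. There is no serious obstacle here --- this is an algebraic topology exercise --- the only subtlety being the identification of $\delta_3$ with $i^\ast$ under Poincar\'e duality, which is standard and must simply be recorded carefully.
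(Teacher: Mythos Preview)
Your proof is correct and follows essentially the same approach as the paper's, just phrased in Poincar\'e-dual terms: where the paper works directly in homology with intersection products and Smith normal form, you pass to $H^1$ via PD and compute with cup products and a $3\times 3$ determinant. The key step in both is the identity $\det(X)=\bigl|\delta_3(\Sigma_1)\cap\delta_3(\Sigma_2)\cap\delta_3(\Sigma_3)\bigr|$ (equivalently your $\bigl|\langle r(x_2)\cup r(x_3)\cup r(x_4),[Y]\rangle\bigr|$), after which the conclusion is immediate from $\det(T^3)=1$; your version is slightly more explicit in justifying $r(x_1)=0$ via the Euler class of the normal bundle.
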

\begin{proof}
    Because the map $\delta_3$ is given by intersection with $Y$, if we choose a basis for $H_3(X)$ given by $[Y], \Sigma_1, \Sigma_2, \Sigma_3$, then $$\det(X) = [Y] \cap \Sigma_1 \cap \Sigma_2 \cap \Sigma_3 = \delta_3(\Sigma_1) \cap \delta_3(\Sigma_2) \cap \delta_3(\Sigma_3).$$ Choose $\Sigma_1, \Sigma_2, \Sigma_3$ and basis vectors $S_i$ of $H_2(Y)$ so that the map $\delta_3: H_3(X) \to H_2(Y)$ is in Smith normal form, with $\delta_3(\Sigma_i) = a_i S_i$. Then because $\det(T^3) = 1$ we have $\det(X) = a_1 a_2 a_3$. This is zero if and only if $\text{rank}(\delta_3) < 3$; if $\text{rank}(\delta_3) = 3$, this quantity coincides with the index of the image of $\delta_3$. 
\end{proof}


We will also be interested in the behavior of $\delta_2: H_2(X) \to H_1(Y)$. Write $c_i = 3 - \text{rank}(\delta_i)$. It is straightforward to verify using the Mayer--Vietoris type sequence (\ref{eq:MV1}), Poincar\'e duality, and the fact that the rank of homology and cohomology coincide, that $$b_4(W) = 0, \quad b_3(W) = 1 + c_3, \quad b_2(W) = 3 + c_2 + c_3, \quad b_1(W) = 3 + c_2, \quad  b_0(W) = 1.$$ We also have $b_i(W, \partial W) = b_{4-i}(W).$ 

\begin{lemma}
If $X$ is a rational homology torus with $Y \subset X$ a non-separating 3-torus, then $b^+(W) = c_2$. In particular, $b^+(W) = 0$ if and only if $b_1(W) = 3$.
\end{lemma}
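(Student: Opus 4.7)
The plan is to compute $\dim \hat H(W)$ and the signature of its intersection form separately, then combine via $b^+(W) = \tfrac 12\big(\dim \hat H(W) + \sigma(W)\big)$.

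First, I would establish $\dim \hat H(W) = 2c_2$ using the long exact sequence in cohomology of the pair $(W, \partial W)$ combined with Poincar\'e--Lefschetz duality $H^k(W, \partial W) \cong H_{4-k}(W)$. From the relevant portion
\[
H^0(\partial W) \to H^1(W, \partial W) \to H^1(W) \xrightarrow{r} H^1(\partial W) \to H^2(W, \partial W) \xrightarrow{\pi} H^2(W),
\]
together with $b_1(W, \partial W) = b_3(W) = 1 + c_3$ and $\dim \mathrm{coker}(H^0(W) \to H^0(\partial W)) = 1$, one finds $\dim \ker(r) = c_3$, hence $\mathrm{rank}(r) = 3 + c_2 - c_3$ and $\dim \mathrm{coker}(r) = 3 - c_2 + c_3$. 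Exactness at $H^2(W, \partial W)$ then gives $\dim \hat H(W) = b_2(W, \partial W) - \dim \mathrm{coker}(r) = 2c_2$.

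Second, I would argue that the intersection form on $\hat H(W)$ has signature zero. Writing $p : H_2(W; \mathbb{Q}) \to H_2(X; \mathbb{Q})$ and $I = \mathrm{Im}(H_2(Y) \to H_2(X))$, a Mayer--Vietoris calculation (together with the observation that classes in $I$ can be pushed into the collar of $Y$ and are therefore annihilated by intersection with any class coming from $\mathrm{Im}(p)$) shows that $I$ is the radical of the intersection form restricted to $\mathrm{Im}(p)$ and has dimension $3 - c_2$. A straightforward change of basis then exhibits the intersection form on $H_2(X; \mathbb{Q})$ as the orthogonal direct sum of a hyperbolic form of rank $2(3-c_2)$ and the induced form on $\mathrm{Im}(p)/I \cong \hat H(W)$, so $\sigma(X) = \sigma(W)$. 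It remains to show $\sigma(X) = 0$ for a rational homology 4-torus. When $\det(X) \ne 0$, integrating a basis of $H^1(X; \mathbb{Z})/\mathrm{tors}$ produces a map $\phi : X \to T^4$ of degree $\pm \det(X)$; moreover $\det(X) \ne 0$ forces $\Lambda^2 H^1(X; \mathbb{Q}) \to H^2(X; \mathbb{Q})$ to be an isomorphism (if a nonzero $\omega$ lay in the kernel, one could find $\eta$ with $\omega \wedge \eta \ne 0 \in \Lambda^4 H^1$ yet cupping to zero in $H^4$, contradicting $\det(X) \ne 0$), so the pullback $\phi^* : H^2(T^4; \mathbb{Q}) \to H^2(X; \mathbb{Q})$ is an isomorphism identifying the intersection form on $X$ with $\det(X)$ times that on $T^4$, whence $\sigma(X) = \mathrm{sign}(\det(X)) \, \sigma(T^4) = 0$. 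When $\det(X) = 0$, the vanishing of the 4-fold cup product on $H^1(X; \mathbb{Q})$ forces $V := \mathrm{Im}(\Lambda^2 H^1 \to H^2) \subset H^2(X; \mathbb{Q})$ to be isotropic; the plan is to use the non-separating $T^3$ to exhibit $\dim V \ge 3$, whence $V$ is Lagrangian and $\sigma(X) = 0$.

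Combining these gives $b^+(W) = b^-(W) = c_2$, proving the main statement. The ``In particular'' portion follows at once since $b^+(W) = 0 \iff \hat H(W) = 0 \iff c_2 = 0 \iff b_1(W) = 3$. The main obstacle I anticipate is verifying the Lagrangian argument in the case $\det(X) = 0$; producing $\dim V \ge 3$ requires a careful analysis of how the non-separating $T^3$ interacts with the rank of $\Lambda^2 H^1(X; \mathbb{Q}) \to H^2(X; \mathbb{Q})$ via cup product with $\alpha_Y = \mathrm{PD}([Y])$.
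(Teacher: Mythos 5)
Your first step is essentially the paper's computation: the identity $\dim \hat H(W) = 2c_2$ is obtained in the paper from the homology long exact sequence of $(W,\partial W)$ (as the rank of $\varphi: H_2(W)\to H_2(W,\partial W)$), and your cohomological version via Lefschetz duality is equivalent and correct. Your Witt-type decomposition showing $\sigma(X)=\sigma(W)$ is also fine: the subspace $I=\mathrm{Im}(H_2(Y;\mathbb Q)\to H_2(X;\mathbb Q))$ is indeed the radical of the form on $\mathrm{Im}(p)=\ker\delta_2$ and has dimension $3-c_2$, and splitting off a hyperbolic summand reproduces, by hand, what the paper gets in one line from additivity of the signature under gluing the two boundary components of $W$.

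The genuine gap is the vanishing $\sigma(X)=0$, which is the crux of the whole lemma, and your proposal only establishes it when $\det(X)\neq 0$ (that argument, via the degree-$\det(X)$ map to $T^4$ and injectivity of $\Lambda^2H^1(X;\mathbb Q)\to H^2(X;\mathbb Q)$, is correct). For $\det(X)=0$ you offer only a plan: exhibit $V=\mathrm{Im}\bigl(\Lambda^2H^1(X;\mathbb Q)\to H^2(X;\mathbb Q)\bigr)$ as a $3$-dimensional isotropic subspace. This is not just unverified, it is unclear it can work as stated: the only classes in $V$ visibly tied to the hypersurface are the products $\alpha\cup\beta$ with $\alpha=\mathrm{PD}[Y]$, and since $\mathrm{PD}_X(\alpha\cup\beta)=f_*\mathrm{PD}_Y(f^*\beta)$, the rank of $\alpha\cup-$ on $H^1(X;\mathbb Q)$ is at most $\min(3-c_2,\,3-c_3)$; it degenerates exactly in the regime $c_2>0$ where the equality $b^+(W)=c_2$ is not already forced by $\dim\hat H(W)=0$, so there is no evident mechanism producing $\dim V\geq 3$ there. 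The paper does not case-split on $\det(X)$ at all: it deduces $\sigma(W)=\sigma(X)=0$ from boundary-gluing additivity together with the vanishing of $\sigma(X)$ for the homology torus $X$, which it takes as given. Note also that for the implication actually used downstream ($b_1(W)=3$, i.e.\ $c_2=0$, implies $b^+(W)=0$) no signature input is needed, since then $\dim\hat H(W)=0$; but your proof of the full statement $b^+(W)=c_2$, and of the converse direction of the ``in particular'' clause, remains incomplete without closing the $\det(X)=0$ case.
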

\begin{proof}
Consider the relative long exact sequence of the pair $(W, \partial W)$, we have $$0 \to \Bbb Z \to H_3(W) \to H_3(W, \partial W) \to H_2(Y) \oplus H_2(Y) \to H_2(W) \xrightarrow{\varphi} H_2(W, \partial W).$$ We computed the rank of each group in the sequence above in terms of $c_2$ and $c_3$; starting on the left, we can compute the rank of each map in the sequence, concluding that $\text{rank}(\varphi) = 2c_2$. 

As remarked in the introduction, the intersection form on $\hat H(W) \cong \text{im}(\varphi)/\text{tors}$ is well-defined and nondegenerate, and is the intersection form associated to $W$. On the other hand, by additivity of signature under boundary-gluing, we have $\sigma(W) = \sigma(X) = 0$. Because $b^+(W) = b^-(W)$ it follows that $$b^+(W) = \frac 12 \dim \hat H(W) = \frac 12 \text{rank}(\varphi) = c_2.\eqno\qedhere$$
\end{proof}

Recall the definition of discriminant in (\ref{discriminant}).

\begin{lemma}\label{lemma:det2}
    If $X$ is a rational homology torus with $Y \subset X$ a non-separating 3-torus and $W = X- Y$ its complement, then $\det(X) \ne 0$ if and only if $b_1(W) = 3$ and $\textup{disc}(W) \ne 0$. In this case, $W$ is a rational homology cobordism from $Y$ to itself, the maps (\ref{eq:correspondence}) are equal, and $\det(X) = \textup{disc}(W)$.
\end{lemma}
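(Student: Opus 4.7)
The plan is to recast everything via Poincar\'e duality as linear algebra on the restriction $i^*_X\colon H^1(X)\to H^1(Y)$. Under PD, $\delta_k$ corresponds to the restriction in degree $4-k$; thus Lemma \ref{lemma:det1} reads ``$\det(X)\ne 0$ iff $i^*_X\colon H^1(X;\mathbb{Q})\to H^1(Y;\mathbb{Q})$ is surjective,'' and the rank computation of the preceding paragraph reads ``$b_1(W)=3$ iff $i^*_X\colon H^2(X;\mathbb{Q})\to H^2(Y;\mathbb{Q})$ is surjective.'' The forward implication $\det(X)\ne 0 \Rightarrow b_1(W)=3$ then follows from the special fact that $H^*(T^3;\mathbb{Q})=\Lambda^* H^1(T^3;\mathbb{Q})$: any class in $H^2(Y;\mathbb{Q})$ is a sum of cup products $\alpha\cup\beta$ of classes in $H^1(Y;\mathbb{Q})$, and since restriction is a ring homomorphism, lifts of $\alpha,\beta$ through $i^*_X$ multiply in $H^2(X)$ to produce a preimage.

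Granted $b_1(W)=3$, the cohomology Mayer--Vietoris sequence $\mathbb{Z}\to H^1(X)\to H^1(W)\xrightarrow{i^*-j^*}H^1(Y)$ together with a rank count forces $i^*-j^*$ to vanish rationally; torsion-freeness of $H^1(Y)$ then promotes this to the integral identity $i^*=j^*=:\iota$, which establishes the ``maps are equal'' part of the lemma. The same sequence becomes short exact, so $q\colon H^1(X)\twoheadrightarrow H^1(W)$ has kernel generated by $\xi:=\textup{PD}[Y]$, and we obtain the factorization $i^*_X=\iota\circ q$. Because $q$ is surjective onto $H^1(W)\cong\mathbb{Z}^3$, the rank of $i^*_X$ equals the rank of $\iota$, so $\det(X)\ne 0$ iff $\iota$ has rank $3$ iff $\textup{disc}(W)=|\det(\iota)|\ne 0$. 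This closes the equivalence; since $c_2=c_3=0$ under either condition, the rank formulas of the previous paragraph give $b_*(W)=b_*(T^3)$, so $W$ is a rational homology cobordism.

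It remains to establish $\det(X)=\textup{disc}(W)$. Extend $\xi$ to an integral basis $x_1,x_2,x_3,\xi$ of $H^1(X)$; then $q(x_1),q(x_2),q(x_3)$ is a basis of $H^1(W)$. Since $\xi=\textup{PD}[Y]$, the cap product identity gives
\[
\pm\det(X)=\langle x_1\cup x_2\cup x_3\cup\xi,[X]\rangle=\langle (i^*_X x_1)\cup(i^*_X x_2)\cup(i^*_X x_3),[Y]\rangle,
\]
and writing $i^*_X x_k=\iota(q(x_k))=\sum_l M_{lk}\,y_l$ in a basis $y_1,y_2,y_3$ of $H^1(Y)$ normalized so $\langle y_1\cup y_2\cup y_3,[Y]\rangle=1$, the right-hand side equals $\det(M)=\pm\textup{disc}(W)$. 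The main technical point to watch throughout is the passage from rational to integral statements: this hinges on $H^1(Y)$ being torsion-free and $\xi$ being primitive in $H^1(X)$ (automatic because $Y$ is non-separating).
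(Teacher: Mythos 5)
Your argument for the equivalence and for $\det(X)=\textup{disc}(W)$ is correct, and it runs essentially along the paper's route: Poincar\'e duality converts $\delta_3,\delta_2$ into restriction maps, the fact that $H^2(T^3;\mathbb{Q})$ is generated by cup products of degree-one classes gives $\det(X)\neq 0\Rightarrow b_1(W)=3$ (the paper phrases this same point through $\Lambda^2(\delta_3)$ and the intersection pairing), and the Mayer--Vietoris rank count gives $i^*=j^*$ and the integral surjectivity of $H^1(X)\to H^1(W)$. Your endgame differs slightly, and pleasantly: instead of identifying $\det(X)$ with the index of $\mathrm{im}(\delta_3)$ via Lemma \ref{lemma:det1} and comparing with $\det(i^*)$ through the factorization $\delta_3 = i^*p^*$, you extend the primitive class $\xi=\mathrm{PD}[Y]$ to an integral basis and evaluate $\langle x_1\cup x_2\cup x_3\cup\xi,[X]\rangle$ directly as $\det(M)=\pm\textup{disc}(W)$. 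This is a clean, self-contained way to get the equality, and your care about primitivity of $\xi$ (from non-separation) and torsion-freeness of $H^1$ is exactly what makes the integral statement go through.

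The one step that is too quick is the claim that $W$ is a rational homology cobordism. Matching Betti numbers $b_*(W)=b_*(T^3)$ is not the definition and does not by itself imply that the two inclusions induce isomorphisms on rational (co)homology, which is what the conclusion asserts. At the point where you invoke it you already know that $\iota=i^*=j^*$ has nonzero determinant, hence is an isomorphism $H^1(W;\mathbb{Q})\to H^1(Y;\mathbb{Q})$; since $H^*(T^3;\mathbb{Q})$ is generated in degree one and restriction is a ring homomorphism, $i^*$ is then surjective on $H^2$ and $H^3$ with rational coefficients, and the equality of Betti numbers upgrades surjectivity to isomorphism in every degree --- this is precisely how the paper closes this step, and the same applies to $j^*$. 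So the fix is the same ring-generation trick you already used for $X\to Y$, now applied to $W\to Y$; as written, however, the inference ``$b_*(W)=b_*(T^3)$, so $W$ is a rational homology cobordism'' is a non sequitur and should be replaced by this one-line argument.
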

\begin{proof}
Suppose first that $\det(X) \ne 0$. Then $\delta_3$ has rank three. Using the commutative diagram \[\begin{tikzcd}
	{\Lambda^2 H_3(X)} && {\Lambda^2 H_2(Y)} \\
	{H_2(X)} && {H_1(Y)}
	\arrow["{\Lambda^2(\delta_3)}"', from=1-1, to=1-3]
	\arrow["{\cup^2_X}", from=1-1, to=2-1]
	\arrow["\cong", from=1-3, to=2-3]
	\arrow["{\delta_2}", from=2-1, to=2-3]
\end{tikzcd}\]
and the fact that $\Lambda^2(\delta_3)$ also has rank three, it follows that $\delta_2$ has rank three as well, so that $b_1(W) = 3$.

Next, suppose $b_1(W) = 3$ and consider the composition $$H^1(X) \xrightarrow{p^*} H^1(W) \xrightarrow{i^*} H^1(Y).$$ The composite $(pi)^* = f^*$ is Poincar\'e dual to the map $\delta_3$. The first map has rank three, as $$0 \to \Bbb Z \to H^1(X) \xrightarrow{p^*} H^1(W) \to H^1(Y) \to \cdots$$ is exact and $b_1(X) = 4$, so the composite $PD(\delta_3) = i^* p^*$ has rank three if and only if $i^*$ has rank three. This establishes the claim that $\det(X) \ne 0$ if and only if $b_1(W) = 3$ and $\det(i^*) \ne 0$.
\par
Supposing now that $b_1(W) = 3$ and $\det(i^*) \ne 0$, because $\det(X) \ne 0$ we find that $W$ has the Betti numbers of the 3-torus; as $i^*: H^1(W; \Bbb Q) \to H^1(Y; \Bbb Q)$ is an isomorphism on rational cohomology, taking the cup-square and cup-cube of this map we find the natural map $i^*: H^k(W; \Bbb Q) \to H^k(Y; \Bbb Q)$ is an isomorphism for all $k$. The same argument holds for $j^*$. 

For the final claim, observe that exactness of $$0 \to \Bbb Z \to H^1(X) \to H^1(W) \xrightarrow{i^* - j^*} H^1(Y) \to \cdots$$ implies by rank considerations that $i^* - j^*$ has rank zero, hence (being a map between free abelian groups) $i^* = j^*$, and in particular the map $p^*: H^1(X) \to H^1(W)$ is surjective. It now follows from the equation $\delta_3 = i^* p^*$ that the index of $\text{im}(\delta_3)$ coincides with $\det(i^*) = \text{disc}(W)$.
\end{proof}

We will now prove Theorem \ref{cor:det-squared}. This is the first place we use that $X$ is an \textit{integer} homology torus; one can extend the result to rational homology tori, with $\det(X)$ replaced by $\det(X) |H_1(X)|$. 

\begin{proof}[Proof of Theorem \ref{cor:det-squared}]
If $\det(X) = 0$, then by Lemma \ref{lemma:det2} we have either $b_1(W) \ne 3$ or $b_1(W) = 3$ but $\text{disc}(W) = 0$. In either case, Theorem \ref{thm:torus-rigidity} gives $\mathfrak m(X) = 0$, as desired.
\par
More interestingly, if $\det(X) \ne 0$, then by Lemma \ref{lemma:det2} we have $b_1(W) = 3$, $b^+(W) = 0$, and $\text{disc}(W) = \det(X)$. What remains is to show that $$D(W) = \begin{cases} |H^2(Y)/f^*H^2(X)| & \text{if $W$ supports a torsion spin}^c \text{ structure} \\ 0 & \text{otherwise.} \end{cases}$$ 
By rank considerations and the fact that $H_1(Y)$ is torsion-free, in the long exact sequence $$\cdots \to H_2(Y) \to H_2(X) \to H_2(W, \partial W) \to H_1(Y)$$ the last map is zero. Thus we have an isomorphism $$H^2(W) \cong H_2(W, \partial W) \cong H_2(X)/f_* H_2(Y).$$ This group is of the form $\Bbb Z^3 \oplus T$ for a torsion abelian group $T$; using that $H_i(X)$ is torsion-free and appealing to Smith normal form, one finds that $H^2(Y)/f^* H^2(X) \cong T$, so $|H^2(Y)/f^* H^2(X)|$ coincides with the number of elements in $\text{Tors} H^2(W)$; and so long as one exists, this set is in bijection with the set of torsion spin$^c$ structures on $W$. Thus Theorem \ref{cor:det-squared} is reduced to showing that $D(W)$ counts the number of torsion spin$^c$ structures on $W$.

$D(W)$ counts spin$^c$ structures with $i^* \spin_W \cong j^* \spin_W$ torsion and $d(\spin_W) = 0$. Because $T^3$ carries a unique torsion spin$^c$ structure, the first condition is equivalent to $\spin_W$ being torsion on the ends (so that in particular $d(\spin_W)$ is defined). Because $X$ is a homology torus we have $\chi(W) = \sigma(W) = 0$ and thus $d(\spin_W) = \frac 14 c_1(\spin_W)^2$. Because $W$ is negative-definite and $$[c_1(\spin_W)] \in \hat H(W) = \text{im}(H^2(W, \partial W; \Bbb R) \to H^2(W; \Bbb R)),$$ we see that $d(\spin_W) = 0$ is equivalent to $[c_1(\spin_W)] = 0$ in real cohomology, which is equivalent to $\spin_W$ being torsion. Thus $D(W)$ counts torsion spin$^c$ structures.
\end{proof}

\subsection{Some examples}
Write $t_i(W) = |\text{Tors} H_i(W)|$ and similarly for variations, so that our formula can be rewritten $\mathfrak m(X) = \pm \det(X) t^2(W)$. It follows from the universal coefficient theorem and Poincar\'e duality that there are essentially two torsion coefficients associated to $W$:
\begin{align*}
t_1(W) = t^2(W) = t_2(W, \partial W) &= t^3(W, \partial W),\\ t_1(W, \partial W) = t^2(W, \partial W) &= t_2(W) = t^3(W).
\end{align*}
Lemma \ref{lemma:det1} computes that $t_2(W) = \det(X),$ when $X$ is an integer homology torus (and more generally for rational homology tori, $t_2(W) = \det(X) t_1(X)$). It is interesting that $t^2(W) = t_1(W)$ does not seem to have such a simple description. In this section we will discuss two concrete examples.

\begin{remark}
Combining our result with Ruberman--Strle's, we see that if $\det(X)$ is odd, then $t^2(W)$ is odd. It is not hard to show this by purely algebraic arguments; more generally, if $\det(X)$ is nonzero mod $p$, then $t^2(W)$ is also nonzero mod $p$.
\end{remark}

\begin{example}\label{ex:3mfd}
Suppose $M$ is a rational homology 3-torus containing a non-separating two-torus $T \hookrightarrow M$, and let $X = S^1 \times M$; notice that $X$ is spin and $\det(X) = \det(M)$. Denoting by $C$ the complement, we have $W \cong S^1 \times C$. Now by the K\"unneth theorem $t_1(W) = t_1(C) = t_2(W)$, and as discussed above $t_2(W) = \det(X) t_1(X) = \det(M) t_1(M)$. Therefore $$\pm \mathfrak m(X) = \det(X) t^2(W) = \det(M)^2 t_1(M).$$
When $M$ is an integer homology torus, this is consistent with the result of Meng-Taubes \cite[Section 5]{RubermanStrle}.
\end{example}

\begin{example}\label{ex:misdet}
Fix non-zero integers $n_1,n_2, n_3$. Consider the cobordism $Z$ obtained from $I\times T^3$ first by adding three $1$-handles, so that the resulting boundary is $T^3\hash(S^1\times S^2)^3$, and then three $2$-handles along curves with homology class
\begin{equation*}
(1,0,0,n_1,0,0),\quad (0,1,0,0,n_2,0),\quad (0,0,1,0,0,n_3).
\end{equation*}
By choosing the framings appropriately, we can arrange that $Z$ is spin. Call $Y$ the other boundary component. Cellular homology computations give that $H_1(Z)=\mathbb{Z}^3$, that the image of
\begin{equation*}
H_1(T^3)\rightarrow H_1(Z) 
\end{equation*}
is a sublattice of index $n_1n_2n_3$, and that
\begin{align*}
H_1(Z,T^3)&=\mathbb{Z}/n_1\oplus\mathbb{Z}/n_2\oplus\mathbb{Z}/n_3\\
H_2(Z,T^3)&=H_3(Z,T^3)=0.
\end{align*}
By Poincar\'e--Lefschetz duality
\begin{equation*}
H^1(Z,Y)=H^2(Z,Y)=0
\end{equation*}
so that
\begin{equation*}
H^1(Z)\rightarrow H^1(Y)
\end{equation*}
is an isomophism. We now take $W=Z\cup_Y \tilde{Z}$, the double of $Z$ along $Y$. The isomorphism just mentioned implies that the map in the Mayer-Vietoris sequence
\begin{equation*}
H^2(W)\rightarrow H^2(Z)\oplus H^2(\tilde{Z})
\end{equation*}
is injective, hence $H^2(W)$ is torsion-free.
\par
To sum up, $W$ is a cobordism from $T^3$ to itself such that the two inclusion map are the same map in $H_1$; gluing together the boundary components we obtain an integral homology torus which has determinant $n_1n_2n_3$ by Lemma \ref{lemma:det1}, but $H^2(W)$ is torsion-free. Therefore, for this family of spin integer homology tori $X$, we have $\pm \mathfrak m(X) = \det(X).$
\end{example}

\vspace{0.3cm}

\textbf{Conflicts of interest:} none.

\vspace{0.3cm}

\textbf{Financial support:} The first author was partially supported by NSF grant DMS-2203498.

\vspace{0.3cm}

\bibliography{biblio.bib}
\bibliographystyle{alpha}
\end{document}